\pgfplotsset{width=8cm,height=4cm,compat=1.9}
\definecolor{darkblue}{rgb}{0.0, 0.0, 0.55}
\definecolor{bordeaux}{rgb}{0.34, 0.01, 0.1}
\newtheorem{theorem}{Theorem}
\newtheorem{lemma}[theorem]{Lemma}
\newtheorem{corollary}[theorem]{Corollary}
\newtheorem{proposition}[theorem]{Proposition}
\newtheorem{definition}[theorem]{Definition}
\newtheorem{example}[theorem]{Example}
\newtheorem{remark}[theorem]{Remark}
\def\Q{{\mathbb{Q}}}
\def\R{{\mathbb{R}}}
\def\N{{\mathbb{N}}}
\def\x{{\mathbf{x}}}
\def\bS{{\mathbf{S}}}
\def\a{{\boldsymbol{\alpha}}}
\def\b{{\boldsymbol{\beta}}}
\def\A{{\mathscr{A}}}
\def\B{{\mathscr{B}}}
\def\cC{{\mathcal{C}}}
\def\int{\hbox{\rm{int}}}
\def\Conv{\hbox{\rm{conv}}}
\def\vec{\hbox{\rm{vec}}}
\def\tr{\hbox{\rm{tr}}}
\def\det{\hbox{\rm{det}}}
\DeclareMathOperator*{\argmin}{arg\,min}
\newcommand{\subjclass}[2][1991]{%
  \let\@oldtitle\@title%
  \gdef\@title{\@oldtitle\footnotetext{#1 \emph{Mathematics subject classification.} #2}}%
}
\newcommand{\keywords}[1]{%
  \let\@@oldtitle\@title%
  \gdef\@title{\@@oldtitle\footnotetext{\emph{Key words and phrases.} #1.}}%
}
\begin{document}

\title{Weighted Geometric Mean, Minimum Mediated Set, and Optimal Simple Second-Order Cone Representation}

\author{Jie Wang\thanks{wangjie212@amss.ac.cn, Academy of Mathematics and Systems Science, Chinese Academy of Sciences, \href{https://wangjie212.github.io/jiewang}{https://wangjie212.github.io/jiewang}}} 

\date{\today}

\keywords{weighted geometric mean, minimum mediated set, second-order cone representation, polynomial optimization, semidefinite representation}

\subjclass[2020]{90C25,90C22,90C23}

\maketitle

\begin{abstract}
We study optimal \emph{simple second-order cone representations} (a particular subclass of second-order cone representations) for weighted geometric means, which turns out to be closely related to minimum mediated sets. Several lower and upper bounds on the size of optimal simple second-order cone representations are proved. In the case of bivariate weighted geometric means (equivalently, one dimensional mediated sets), we are able to prove the exact size of an optimal simple second-order cone representation and give an algorithm to compute one.
In the genenal case, fast heuristic algorithms and traversal algorithms are proposed to compute an approximately optimal simple second-order cone representation. Finally, applications to polynomial optimization, matrix optimization, and quantum information are provided.
\end{abstract}

\section{Introduction}

This paper is concerned with the following problem:

\vspace{0.5em}
(\uppercase\expandafter{\romannumeral1}) \emph{Given a weighted geometric mean inequality $x_1^{\lambda_1}\cdots x_m^{\lambda_m}\ge x_{m+1}$ with weights $(\lambda_i)_{i=1}^m\in\Q_+^m$, $\sum_{i=1}^m\lambda_i=1$, 
construct an equivalent representation using as few quadratic inequalities (i.e., $x_ix_j\ge x_k^2$) as possible.}
\vspace{0.5em}

The study of Problem (\uppercase\expandafter{\romannumeral1}) is motivated by the fact that any solution to Problem (\uppercase\expandafter{\romannumeral1}) immediately gives rise to a second-order cone representation for the convex set defined by the weighted geometric mean inequality. One advantage of the second-order cone representation is that the related optimization problem can be solved with off-the-shelf second-order cone programming (SOCP) solvers \cite{alizadeh2003second}, e.g., {\tt Mosek} \cite{mosek}, {\tt ECOS} \cite{ecos}. Naturally, to achieve the best efficiency, we wish to obtain such a second-order cone representation of as small size as possible. 

Note that using the so-called ``tower-of-variable'' construction described in \cite[Lecture 3.3]{ben2001lectures}, one is able to give an equivalent second-order cone representation for a weighted geometric mean inequality, whose size is, however, typically far from optimal. As far as the author knows, the problem of finding optimal second-order cone representations for general weighted geometric mean inequalities has not been investigated much in the literature, except for some special cases, which are mentioned below.
Assume that in Problem (\uppercase\expandafter{\romannumeral1}) the weights are written as $\lambda_i=\frac{s_i}{t},s_i\in\N$ for $i=1,\ldots,m$ and $t\in\N$. In \cite{morenko2013p}, Morenko et al. constructed an ``economical'' representation (without any proof concerning optimality) for a weighted geometric mean inequality in the case of $m=3$ and $t=2^l$ for some $l\in\N$ when studying $p$-norm cone programming. In \cite{kian2019minimal}, Kian, Berk and G\"urler dealt with Problem (\uppercase\expandafter{\romannumeral1}) in the case of $t=2^l$ for some $l\in\N$, and proposed a heuristic algorithm for computing a second-order cone representation of small size.

On the other hand, there are plenty of applications of weighted geometric mean inequalities in optimization. For example:
\vspace{0.5em}

{\bf Second-order cone representations for other types of inequalities \cite[Lecture 3.3.1]{ben2001lectures}}. There are other types of inequalities that can be expressed by weighted geometric mean inequalities, and so the second-order cone representations for weighted geometric mean inequalities can immediately lead to second-order cone representations for these inequalities with rational powers:

$\bullet$ $x_1^{\lambda_1}\cdots x_m^{\lambda_m}\ge x_{m+1}$ $\iff$ $x_1^{\lambda_1}\cdots x_m^{\lambda_m}y^{1-\sum_{i=1}^m\lambda_i}\ge x_{m+1}$, $y=1$ ($\lambda_1,\ldots,\lambda_m>0,\sum_{i=1}^m\lambda_i<1$).

$\bullet$ $x^{\lambda}\le y$ $\iff$ $y^{\frac{1}{\lambda}}z^{1-\frac{1}{\lambda}}\ge x$, $z=1$ $(\lambda>1)$. 

$\bullet$ $x^{\lambda}\ge y$ $\iff$ $x^{\lambda}z^{1-\lambda}\ge y$, $z=1$ $(0<\lambda<1)$. 

$\bullet$ $x^{-\lambda}\le y$ $\iff$ $x^{\frac{\lambda}{1+\lambda}}y^{\frac{1}{1+\lambda}}\ge z$, $z=1$ $(\lambda>0)$. 

$\bullet$ $x_1^{-\lambda_1}\cdots x_m^{-\lambda_m}\le y$ $\iff$ $x_1^{\frac{\lambda_1}{1+\sum_{i=1}^m\lambda_i}}\cdots x_m^{\frac{\lambda_m}{1+\sum_{i=1}^m\lambda_i}}y^{\frac{1}{1+\sum_{i=1}^m\lambda_i}}\ge z$, $z=1$ $(\lambda_1,\ldots,\lambda_m>0)$. 
\vspace{0.5em}

{\bf $p$-norm cone programming.}
Let $p\ge1$ be a rational number. The $p$-norm of a vector $\x\in\R^n$ is $\Vert\x\Vert_p\coloneqq\sum_{i=1}^n|x_i|^{\frac{1}{p}}$. The \emph{$p$-norm cone} of dimension $n+1$ is defined by the set
\begin{equation*}
	\{(\x,z)\in\R^{n+1}\mid z\ge\Vert\x\Vert_p\}.
\end{equation*}
Note that the second-order cone is exactly the $p$-norm cone with $p=2$. The $p$-norm cone admits second-order cone representations since the inequality $z\ge\Vert\x\Vert_p$ can be rewritten as $z\ge\sum_{i=1}^n|x_i|^p/z^{p-1}$, which is equivalent to (\cite[Lecture 3.3.1]{ben2001lectures})
\begin{equation*}
	\exists(y_i)_{i=1}^n,(w_i)_{i=1}^n\in\R_+^n\text{ s.t. }\begin{cases}
		z^{1-\frac{1}{p}}y_i^{\frac{1}{p}}\ge w_i, |x_i|\le w_i, i=1,\ldots,n,\\
		\sum_{i=1}^ny_i=z.
	\end{cases}
\end{equation*}
\vspace{0.5em}

{\bf Power cone programming \cite[Lecture 3.3.1]{ben2001lectures}}.
Given $\lambda_1,\ldots,\lambda_m>0$ with $\sum_{i=1}^m\lambda_i=1$, the corresponding \emph{power cone} is defined by the set
\begin{equation*}
	\{(\x,z)\in\R_+^n\times\R\mid x_1^{\lambda_1}\cdots x_m^{\lambda_m}\ge |z|\}.
\end{equation*}
The power cone admits second-order cone representations since the inequality $x_1^{\lambda_1}\cdots x_m^{\lambda_m}\ge |z|$ is equivalent to
\begin{equation*}
	\exists y\ge0\text{ s.t. }x_1^{\lambda_1}\cdots x_m^{\lambda_m}\ge y, |z|\le y.
\end{equation*}
\vspace{0.5em}

{\bf Matrix optimization.} The second-order cone representation for weighted geometric mean inequalities allows one to give (approximate) semidefinite representations for many matrix functions, as revealed in \cite{fawzi2017lieb,fawzi2019semidefinite,sagnol2013}. These functions include the matrix power function \cite{helton2015free}, Lieb's function, the Tsallis entropy, the Tsallis relative entropy \cite{fawzi2017lieb}, the matrix logarithm function \cite{fawzi2019semidefinite}, which have various applications in matrix optimization and quantum information.

\vspace{0.5em}

A relevant problem to Problem (\uppercase\expandafter{\romannumeral1}) is

\vspace{0.5em}
(\uppercase\expandafter{\romannumeral2}) \emph{Given a set of points $\{\a_{i}\}_{i=1}^m\subseteq\R^{m-1}$ forming the vertices of a simplex and a point $\a_{m+1}=\sum_{i=1}^{m}\lambda_i\a_i$ with $(\lambda_i)_{i=1}^m\in\Q_+^m$ and $\sum_{i=1}^m\lambda_i=1$, find as few points as possible (say, $\{\a_{i}\}_{i=m+2}^{m+n}$) such that every point in $\{\a_{i}\}_{i=m+1}^{m+n}$ is an average of two distinct points in $\{\a_{i}\}_{i=1}^{m+n}$.}
\vspace{0.5em}

Problem (\uppercase\expandafter{\romannumeral2}) arises from the study of nonnegative circuit polynomials \cite{magron2023sonc,soncsocp}. Sums of nonnegative circuit polynomials (SONC) were proposed by Iliman and De Wolff as certificates of polynomial nonnegativity \cite{iw}, and have been employed to solve sparse polynomial optimization problem in a ``degree-free'' manner \cite{diw,dkw,magron2023sonc}. The set of points $\{\a_{i}\}_{i=m+1}^{m+n}$ considered in Problem (\uppercase\expandafter{\romannumeral2}) is called a (minimum) mediated set. Mediated sets over integers were initially introduced by Reznick in \cite{re} to study agiforms, and were extended to the rational case by Wang and Magron \cite{soncsocp}. It was proved in \cite{magron2023sonc} that a circuit polynomial is nonnegative if and only if it can be written as a sum of binomial squares supported on a mediated set, with the number of binomial squares equaling the number of points contained in the mediated set. In \cite{blekherman2022moments}, the notion of mediated sets plays an important role in the study of the tropicalization of pseudo-moment cones and discrete mid-point convexity. More recent research on mediated sets can be found in \cite{hartzer2022initial,powers2021note}.

Interestingly, it turns out that solutions to Problem (\uppercase\expandafter{\romannumeral1}) are in a one-to-one correspondence to solutions to Problem (\uppercase\expandafter{\romannumeral2}). As one would see later, the development of both theory and algorithms in the present paper deeply relies on this appealing connection.

We should point out that in the literature, there is a more general notion of second-order cone representations (also called second-order cone lifts) which are of the form $\pi(K^n\cap L)$ with $K$ being the three-dimensional rotated second-order cone, $L$ being an affine space, and $\pi$ being a linear map \cite{fawzi2018lower,fawzi2022lifting,gouveia2013lifts}. However in this paper, we would rather work on the more restrictive case (called \emph{simple} second-order cone representations) considered in Problem (\uppercase\expandafter{\romannumeral1}) where $\pi$ specializes to a coordinate projection and $L$ is an affine space that constrains certain coordinates to be equal due to the following reasons: (1) representations of this restrictive form can be lifted to semidefinite representations of matrix geometric means in a direct way (see Theorem \ref{sec6:thm0} for a more formal statement about this); (2) representations of this restrictive form admit a one-to-one correspondence to mediated sets which have a nice geometric description; (3) we are able to propose efficient algorithms to compute simple second-order cone representations of (approximately) optimal size.
 
Our main contributions are summarized as follows:

$\bullet$ In Section \ref{sec2}, we prove several lower bounds and upper bounds on the size of optimal simple second-order cone representations for weighted geometric mean inequalities (equivalently, the size of minimum mediated sets). In particular, the lower bounds are shown to be attainable by examples.

$\bullet$ In Section \ref{sec3}, we prove the exact size of an optimal simple second-order cone representation for bivariate weighted geometric mean inequalities, which resolves a conjecture proposed in \cite{magron2023sonc}.
In addition, we provide a binary tree representation of one dimensional ``successive'' minimum mediated sets.

$\bullet$ In Section \ref{sec4}, we propose several heuristic algorithms for computing an approximately optimal simple second-order cone representation of a weighted geometric mean inequality (equivalently, an approximately minimum mediated set) and compare their practical performance. We also propose a brute-force algorithm for computing an exact optimal simple second-order cone representation of a weighted geometric mean inequality (equivalently, a minimum mediated set). Numerical experiments demonstrate that the heuristic algorithms can efficiently produce simple second-order cone representations of size being equal or close to the optimal one.

$\bullet$ In Section \ref{sec5}, we provide applications of the proposed algorithms to polynomial optimization, matrix optimization and quantum information, and demonstrate their efficiency by numerical experiments.

\section{Preliminaries}
Let $\N$, $\R$, $\Q$ be the set of nonnegative integers, real numbers, rational numbers, respectively. Let $\N^*$, $\R_{\ge0},\R_+,\Q_+$ be the set of positive integers, nonnegative real numbers, positive real numbers, positive rational numbers, respectively. For $a\in\R$, $\lceil a\rceil$ stands for the least integer that is no less than $a$. For a tuple of integers $(s_1,\ldots,s_m)\in\N^m$, we use $\hat{s}$ to denote the sum of its entries, i.e., $\hat{s}\coloneqq\sum_{i=1}^ms_i$. For a set $A$, we use $|A|$ to denote the cardinality of $A$. For a finite set $\A\subseteq\N^n$, we denote by $\Conv(\A)$ the convex hull of $\A$, and use $\Conv(\A)^{\circ}$ to denote the relative interior of $\Conv(\A)$. For integers $s_1,s_2,\ldots$, we use the parenthesis $(s_1,s_2,\ldots)$ to denote their greatest common divisor. Let $\bS^n$, $\bS_+^n$, $\bS_{++}^n$ be the set of symmetric matrices, positive semidefinite, positive definite matrices of size $n$, respectively.

The $n$-dimensional \emph{rotated second-order cone} is defined by the set
\begin{equation*}
	K_n\coloneqq\left\{(a_i)_{i=1}^n\in\R^{n}\middle\vert2a_1a_2\ge\sum_{i=3}^{n}a_i^2,a_1\ge0,a_2\ge0\right\}.
\end{equation*}
In this paper, we are mostly interested in the $3$-dimensional rotated second-order cone\footnote{Actually, any $n$-dimensional second-order cone is representable using the $3$-dimensional second-order cone.} ($K\coloneqq K_3$) which we simply refer to as the second-order cone.

\section{Optimal simple second-order cone representations and minimum mediated sets}\label{sec2}
In this paper, a \emph{weighted geometric mean inequality} refers to an inequality of form
\[x_1^{\lambda_1}\cdots x_m^{\lambda_m}\ge x_{m+1}, \text{ with }(\lambda_i)_{i=1}^m\in\R_+^m\text{ and }\sum_{i=1}^m\lambda_i=1,\] 
where the variables $x_1,\ldots,x_{m+1}$ are assumed to be nonnegative. Throughout the paper, we assume that the weights $(\lambda_i)_{i=1}^m\in\Q_+^m$ are rational numbers. It is known that when the weights are rational, the weighted geometric mean inequality is second-order cone representable \cite[Lecture 3.3]{ben2001lectures}. As $(\lambda_i)_{i=1}^m$ are rational numbers by assumption, we can write $\lambda_i=\frac{s_i}{\hat{s}}$, $s_i\in\N$ for $i=1,\ldots,m$. In view of this, we also call $x_1^{s_1}\cdots x_m^{s_m}\ge x_{m+1}^{\hat{s}}$ with $(s_i)_{i=1}^m\in\N^m,(s_1,\ldots,s_m)=1$ a weighted geometric mean inequality.

\begin{definition}
A \emph{simple second-order cone representation} for a weighted geometric mean inequality $x_1^{s_1}\cdots x_m^{s_m}\ge x_{m+1}^{\hat{s}}$ consists of a set of quadratic inequalities $x_{i_k}x_{j_k}\ge x_{m+k}^2,k=1,\ldots,n$ such that 
\begin{equation}\label{sec3:eq1}
x_1^{s_1}\cdots x_m^{s_m}\ge x_{m+1}^{\hat{s}}\iff
\begin{cases}
   x_{i_1}x_{j_1}\ge x_{m+1}^2,\\
   \quad\quad\quad\vdots\\
   x_{i_n}x_{j_n}\ge x_{m+n}^2,\\
   i_k,j_k\in\{1,2,\ldots,m+n\},\quad k=1,\ldots,n,\\
\end{cases}
\end{equation}
where $x_{m+2},\ldots,x_{m+n}$ are $n-1$ auxiliary nonnegative variables. We call $n$ the size of the second-order cone representation. A simple second-order cone representation of minimum size is called an \emph{optimal} simple second-order cone representation whose size is denoted by $L(s_1,\ldots,s_m)$.
\end{definition}

\begin{remark}
The second-order cone representation in \eqref{sec3:eq1} is uniquely determined by the set of integer triples $\{(i_k,j_k,m+k)\}_{k=1}^n$ which is hereafter referred to as a \emph{configuration}.
\end{remark}

\begin{remark}
The value of $L(s_1,\ldots,s_m)$ clearly does not depend on the ordering of $s_1,\ldots,s_m$.
\end{remark}

\begin{example}
The weighted geometric mean inequality $x_1^3x_2^8\ge x_3^{11}$ admits a simple second-order cone representation: $x_2x_6\ge x_3^2,x_1x_3\ge x_4^2,x_3x_4\ge x_5^2,x_4x_5\ge x_6^2$.
\end{example}

A set of points $\A=\{\a_1,\ldots,\a_{m}\}\subseteq\R^{m-1}$ is called a \emph{trellis} if they are affinely independent.
Given a trellis $\A$, we say that a set of points $\B$ is an \emph{$\A$-mediated set}, if every point $\b\in\B$ is an average of two distinct points in $\A\cup\B$. One could immediately deduce that the points in an $\A$-mediated set belong to the convex hull of $\A$. An $\A$-mediated set containing a given point $\b\in\Conv(\A)^{\circ}$ is called an $(\A,\b)$-mediated set and a \emph{minimum $(\A,\b)$-mediated set} is an $(\A,\b)$-mediated set with the smallest cardinality. We sometimes omit the prefixes $\A
,\b$ and simply say (minimum) mediated sets if there is no need to mention the specific $\A
,\b$. If $\b\in\Conv(\A)^{\circ}$, then there exists a unique tuple of scalars $\lambda_1,\ldots,\lambda_{m}\in\R_+$ (the barycentric coordinates) such that $\sum_{i=1}^m\lambda_{i}=1$ and $\b=\sum_{i=1}^{m}\lambda_i\a_i$. It is easy to see that for an $(\A,\b)$-mediated set to exist, the $\lambda_i$'s are necessarily rational numbers.

\begin{example}\label{sec3:ex1}
Let $\A=\{\a_1=(4,2),\a_2=(2,4),\a_3=(0,0)\}$, and $\b_1=(2,2),\b_2=(1,2),\b_3=(3,2)$. It is easy to check by hand that $\{\b_1,\b_2,\b_3\}$ is an $\A$-mediated set; see Figure \ref{sec3:fg1}.

\begin{figure}
\centering
\begin{tikzpicture}
\draw (0,0)--(2,4);
\draw (0,0)--(4,2);
\draw (2,4)--(4,2);
\draw (1,2)--(4,2);
\fill (0,0) circle (2pt);
\node[below left] (1) at (0,0) {$\a_3$};
\fill (2,4) circle (2pt);
\node[above left] (2) at (2,4) {$\a_2$};
\fill (4,2) circle (2pt);
\node[below right] (3) at (4,2) {$\a_1$};
\fill (2,2) circle (2pt);
\node[below] (4) at (2,2) {$\b_1$};
\fill (1,2) circle (2pt);
\node[below left] (5) at (1,2) {$\b_2$};
\fill (3,2) circle (2pt);
\node[below] (6) at (3,2) {$\b_3$};
\end{tikzpicture}
\caption{$\{\b_1,\b_2,\b_3\}$ forms an $\{\a_1,\a_2,\a_3\}$-mediated set.}
\label{sec3:fg1}
\end{figure}

\end{example}

Assume that $\A=\{\a_1,\ldots,\a_{m}\}$ is a trellis and $\a_{m+1}=\sum_{i=1}^{m}\frac{s_i}{\hat{s}}\a_i$ with $(s_i)_{i=1}^m\in(\N^*)^m,(s_1,\ldots,s_m)=1$. If $\{\a_{m+1},\ldots,\a_{m+n}\}$ is an $\A$-mediated set, then by definition  the following system of equations must be satisfied for appropriate subscripts $i_k,j_k$:
\begin{equation}\label{sec3:eq2}
\begin{cases}
   \a_{i_1}+\a_{j_1}=2\a_{m+1},\\
   \quad\quad\quad\vdots\\
   \a_{i_n}+\a_{j_n}=2\a_{m+n},\\
   i_k,j_k\in\{1,2,\ldots,m+n\},\quad k=1,\ldots,n.\\
\end{cases}
\end{equation}
Comparing \eqref{sec3:eq1} with \eqref{sec3:eq2}, we see that there is a one-to-one correspondence between simple second-order cone representations for the weighted geometric mean inequality $x_1^{s_1}\cdots x_m^{s_m}\ge x_{m+1}^{\hat{s}}$ and $\A$-mediated sets containing the point $\a_{m+1}=\sum_{i=1}^{m}\frac{s_i}{\hat{s}}\a_i$:{\bf
\begin{equation*}
\text{simple second-order cone representations for } x_1^{s_1}\cdots x_m^{s_m}\ge x_{m+1}^{\hat{s}}
\end{equation*}
\begin{equation*}
\updownarrow
\end{equation*}
\begin{equation*}
\text{configurations } \{(i_k,j_k,m+k)\}_{k=1}^n
\end{equation*}
\begin{equation*}
\updownarrow
\end{equation*}
\begin{equation*}
\text{$\A$-mediated sets containing $\a_{m+1}=\sum_{i=1}^{m}\frac{s_i}{\hat{s}}\a_i$}
\end{equation*}}
It then follows that $L(s_1,\ldots,s_m)$ also denotes the cardinality of a minimum $(\A,\a_{m+1})$-mediated set.

We now prove a lower bound on the size of optimal simple second-order cone representations for a weighted geometric mean inequality in terms of the number of variables involved.
\begin{theorem}\label{sec3:thm1}
Let $s_1,\ldots,s_m\in\N^*$ be a tuple of integers with $(s_1,\ldots,s_m)=1$. Then 
\begin{equation}
    L(s_1,\ldots,s_m)\ge m-1.
\end{equation}
\end{theorem}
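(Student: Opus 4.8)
The plan is to work on the mediated-set side of the correspondence, where the geometric structure makes a dimension/rank argument transparent. Let $\A=\{\a_1,\ldots,\a_m\}$ be a trellis (so $\a_1,\ldots,\a_m$ are affinely independent in $\R^{m-1}$) and let $\{\a_{m+1},\ldots,\a_{m+n}\}$ be an $(\A,\a_{m+1})$-mediated set of minimum size, so $n=L(s_1,\ldots,s_m)$. I want to show $n\ge m-1$.

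First I would observe that $\a_{m+1}=\sum_{i=1}^m\frac{s_i}{\hat s}\a_i$ lies in the \emph{interior} of $\Conv(\A)$, since all $s_i\in\N^*$. Because each defining equation in \eqref{sec3:eq2} reads $\a_{m+k}=\tfrac12(\a_{i_k}+\a_{j_k})$ with $i_k,j_k\in\{1,\ldots,m+n\}$, an easy induction on $k$ shows every $\a_{m+k}$ is a convex combination of $\a_1,\ldots,\a_m$ with \emph{strictly positive} coefficients: indeed $\a_{m+1}$ has this property, and if $\a_{i_k},\a_{j_k}$ are convex combinations of the vertices with nonnegative coefficients, at least one of which (namely the one that is not a vertex, or a vertex contributing a positive mass) has all-positive coefficients, then so does their midpoint; one should check the base/edge cases carefully but this is routine. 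Hence each $\a_{m+k}$ involves all $m$ vertices nontrivially.

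The key step is a counting/rank argument on the barycentric coordinates. Write each point $\a_\ell$ ($\ell=1,\ldots,m+n$) in barycentric coordinates as a vector $\bu_\ell\in\R^m$ with nonnegative entries summing to $1$; the vertices give the standard basis vectors $\bu_1=e_1,\ldots,\bu_m=e_m$. The equations \eqref{sec3:eq2} become $\bu_{m+k}=\tfrac12(\bu_{i_k}+\bu_{j_k})$. Now bring in denominators: let $t$ be the least common denominator so that $t\,\bu_{m+1}=(s_1,\ldots,s_m)\in\Z^m$; in fact one can take $t=\hat s$. The mediation relations force the denominators of the $\bu_{m+k}$ to be \emph{powers of $2$ times divisors of $\hat s$}, but more to the point, reading the midpoint relations modulo the right lattice gives a strict decrease. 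Concretely, I would assign to each $\bu_\ell$ a ``2-adic complexity'' and argue that producing a point whose coordinates have $2$-adic valuations as negative as those of $\bu_{m+1}$ requires at least $m-1$ intermediate midpoint steps, because each midpoint operation can lower the minimum $2$-adic valuation among coordinates by at most one while the vertices start at valuation $+\infty$ (coordinates $0$ or $1$), and one must simultaneously ``activate'' all $m$ coordinates, which the very first midpoint of two vertices cannot do for more than two coordinates. Iterating, after $k$ midpoints at most $2^k$ coordinates can be nonzero, but we also need the support to reach size $m$; so $2^{?}$ — wait, support doubling would only give $\log m$, too weak. The right invariant is therefore not support size but the denominator: after $k$ midpoint steps starting from integer (0/1) vectors, every coordinate of every $\bu_{m+j}$, $j\le k$, lies in $\frac1{2^k}\Z$ after clearing the $\hat s$; combined with the fact that $\a_{m+k}$ must have \emph{all} coordinates positive and the system \eqref{sec3:eq2} is a triangular system of $n$ vector equations in $\R^m$, one gets that the affine span of $\{\bu_{m+1},\ldots,\bu_{m+n}\}\cup\{e_1,\ldots,e_m\}$ needs $\ge m-1$ of the $\bu_{m+j}$'s to be affinely independent from the simplex facets containing none of them — i.e. one produces a chain $\a_{m+1},\ldots$ of length $\ge m-1$.

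Honestly, the cleanest route, and the one I would actually write, is this: consider the bipartite-style dependency and argue by induction on $m$. If $n\le m-2$, then the $m+n$ points $\a_1,\ldots,\a_{m+n}$ span an affine space of dimension $\le m-1$ but the $n$ ``new'' points are each determined as midpoints, so the whole configuration lives in the affine hull of $\{\a_1,\ldots,\a_m\}$; the real content is that some vertex $\a_i$ can be shown to appear in \emph{exactly one} of the $2n$ slots $\{\a_{i_k},\a_{j_k}\}$ — because otherwise a pigeonhole on $2n\le 2m-4$ slots versus $m$ vertices forces a vertex to be unused, contradicting that $\a_{m+1}$ has full support and the support of $\a_{m+k}$ is the union of supports of its parents. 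Once a vertex $\a_i$ appears in exactly one slot, say in the equation $\a_{m+k}=\tfrac12(\a_i+\a_{j_k})$, that $\a_{m+k}$ is the only point whose support contains $i$ besides $\a_i$ itself, so $m+k=m+1$ (the target must have full support); then deleting $\a_i$ and projecting onto the facet spanned by the other vertices turns the configuration into a valid $(\A',\a_{m+1}')$-mediated set with $m-1$ vertices and $n-1$ new points, and induction gives $n-1\ge m-2$. The base case $m=2$ is $L(s_1,s_2)\ge 1$, clear since $\a_3$ is interior so cannot be a vertex. The main obstacle I anticipate is the pigeonhole step: carefully proving that every vertex must be \emph{used}, and that a vertex used exactly once forces the companion point to be the full-support target $\a_{m+1}$ — this needs the support-union property of midpoints plus the fact that in a minimum mediated set no point is redundant, so I would first record as a small lemma that in any mediated set the support of each $\a_{m+k}$ is the union of the supports of its two parents, and that $\a_{m+1}$ is the unique point with full support is \emph{not} automatic, so I may instead argue that \emph{some} interior point (necessarily full support after the induction setup) is hit, and relabel. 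Handling that relabelling cleanly is the delicate part.
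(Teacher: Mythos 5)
Your final argument (the induction on $m$, which is the one you say you would actually write) has a genuine gap at the reduction step. After locating a vertex $\a_i$ that appears in exactly one parent-slot, you propose to delete $\a_i$ and ``project onto the facet spanned by the other vertices'' to obtain a mediated set for a trellis of $m-1$ points with $n-1$ new points. But this projection (equivalently: dropping the $i$-th barycentric coordinate and renormalizing) does not preserve midpoint relations: if $\bu=\frac12(\bv+\bw)$ and the three points have different $i$-th barycentric coordinates, the renormalized images satisfy only a convex-combination relation, not $\bu'=\frac12(\bv'+\bw')$, so the inductive hypothesis cannot be applied to the projected configuration. A second unjustified step is the claim that the unique child of a once-used vertex must be $\a_{m+1}$ ``since the target must have full support'': that child could instead be a proper ancestor of $\a_{m+1}$ in the dependency order, and you give no argument ruling this out — indeed you flag the relabelling yourself as unresolved. (Separately, the assertion in your first paragraph that every mediated point has strictly positive barycentric coordinates is false: in the paper's Example \ref{sec3:ex1} the point $\b_2$ is the midpoint of the edge $\a_2\a_3$ and has support of size two. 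You do not rely on this in the final argument, but it is not a ``routine'' induction — its base case already fails.)

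The ingredient you are missing is much simpler than the induction. The paper's proof is a one-line slot count on the system \eqref{sec3:eq2}: the $2n$ parent-slots $\{i_k,j_k\}_{k=1}^n$ must cover all $m$ vertices (your support-union observation, which you do have), and, by \emph{minimality}, must also cover all $n-1$ auxiliary points $m+2,\ldots,m+n$ — an auxiliary point never used as a parent could simply be deleted, yielding a smaller mediated set. Hence $2n\ge m+(n-1)$, i.e.\ $n\ge m-1$. Your pigeonhole establishes only the first half of this coverage claim (which alone gives $2n\ge m$, too weak for the stated bound), and the induction machinery you build to compensate does not close.
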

\begin{proof}
Suppose that $\A=\{\a_1,\ldots,\a_{m}\}$ is a trellis and $\a_{m+1},\ldots,\a_{m+n}$ is a minimum $(\A,\a_{m+1})$-mediated set such that
$\sum_{i=1}^{m}s_i\a_i=\hat{s}\a_{m+1}$. Let us consider the system of equations \eqref{sec3:eq2}.
Note first that $\{1,\ldots,m\}\subseteq\cup_{k=1}^n\{i_k,j_k\}$. Moreover, we must have $\{m+2,\ldots,m+n\}\subseteq\cup_{k=1}^n\{i_k,j_k\}$ as if $k\notin\cup_{k=1}^n\{i_k,j_k\}$ for some $k\in\{m+2,\ldots,m+n\}$, then we can delete $\a_{k}$ to obtain a smaller $(\A,\a_{m+1})$-mediated set. From these facts, we get
\begin{equation*}
    2n\ge m+n-1,
\end{equation*}
which yields the desired inequality.
\end{proof}

The lower bound in Theorem \ref{sec3:thm1} is attainable as the following example shows.
\begin{example}
The weighted geometric mean inequality $x_1x_2x_3x_4\ge x_5^{4}$ admits a simple second-order cone representation: $x_6x_7\ge x_5^2,x_1x_2\ge x_6^2,x_3x_4\ge x_7^2$.
\end{example}

In the language of mediated sets, Theorem \ref{sec3:thm1} says that the cardinality of a minimum mediated set is bounded by its dimension (i.e., the dimension of its associated trellis) from below.

From the point of view of mediated sets, we are actually able to give another lower bound on $L(s_1,\ldots,s_m)$ in terms of the sum $\hat{s}$. For the proof, we need the notion of M-matrices and two preliminary results.
\begin{definition}
Let $C=[c_{ij}]$ be a real matrix. Then $C$ is called an \emph{M-matrix} if it satisfies
\begin{enumerate}[(1)]
	\item $c_{ij}\le0$ if $i\ne j$;
	\item $C=tI-B$, where $B$ is a matrix with nonnegative entries, $I$ is an identity matrix, and $t$ is no less than the spectral radius (the maximum of the moduli of the eigenvalues) of $B$.
\end{enumerate}
\end{definition}
A remarkable property of M-matrices is that the determinant of an M-matrix is bounded by the product of its diagonals from above.
\begin{lemma}\label{sec3:lm2}
Let $C=[c_{ij}]\in\R^{n\times n}$ be an M-matrix. Then, it holds
\begin{equation}
	\det(C)\le\prod_{i=1}^nc_{ii}.
\end{equation}
\end{lemma}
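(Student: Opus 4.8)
The plan is to exploit the structural characterization $C = tI - B$ with $B \ge 0$ entrywise and $t \ge \rho(B)$, the spectral radius of $B$. First I would dispose of the degenerate cases: if $t = \rho(B)$ then $\det(C) = 0$ (since $t$ is an eigenvalue of $B$ by Perron--Frobenius, so $0$ is an eigenvalue of $C$), while all diagonal entries $c_{ii} = t - b_{ii} \ge 0$ because $b_{ii} \le \rho(B) \le t$ (the diagonal entries of a nonnegative matrix are bounded by its spectral radius). Hence $\det(C) = 0 \le \prod_i c_{ii}$ in that case. So I may assume $t > \rho(B)$, which makes $C$ a nonsingular M-matrix; in particular $C^{-1} \ge 0$ entrywise (a standard fact for nonsingular M-matrices, provable via the Neumann series $C^{-1} = t^{-1}\sum_{k\ge 0}(B/t)^k$, which converges since $\rho(B/t) < 1$), and $\det(C) > 0$.

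The main step is an induction on $n$ using Schur complements, or equivalently a direct appeal to the Fischer-type / Hadamard-type inequality for M-matrices. I would proceed by induction on $n$. For $n = 1$ the claim is trivial. For the inductive step, partition $C$ as a $2\times 2$ block matrix with the $(1,1)$ block being the scalar $c_{11} > 0$ and the trailing $(n-1)\times(n-1)$ principal submatrix $C'$; then $\det(C) = c_{11}\det(C'') $, where $C'' = C' - \frac{1}{c_{11}} v w^{\top}$ is the Schur complement, with $v, w \ge 0$ the relevant off-diagonal blocks (nonnegative up to sign, so that $v w^\top \ge 0$ entrywise). The key observations are: (i) $C'$ is itself an M-matrix (a principal submatrix of a nonsingular M-matrix is a nonsingular M-matrix), and (ii) the Schur complement $C''$ is again an M-matrix whose diagonal entries satisfy $c''_{ii} \le c'_{ii} = c_{i+1,i+1}$, because we are subtracting the nonnegative quantity $\frac{1}{c_{11}} v_i w_i \ge 0$ from each diagonal entry, and its off-diagonal entries remain $\le 0$ (we subtract nonnegative numbers from already nonpositive entries). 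Applying the induction hypothesis to $C''$ gives $\det(C'') \le \prod_{i=2}^{n} c''_{ii} \le \prod_{i=2}^{n} c_{ii}$, and multiplying by $c_{11} > 0$ yields $\det(C) \le \prod_{i=1}^n c_{ii}$.

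The one point that needs care — and what I expect to be the main obstacle — is verifying that the Schur complement of an M-matrix is an M-matrix, and in particular that its diagonal entries do not increase. The sign pattern part (off-diagonals stay $\le 0$) is immediate, but one must also check the spectral-radius condition in the definition of M-matrix, i.e. that $C''$ decomposes as $t'I - B''$ with $t' \ge \rho(B'')$. The cleanest way around this is to observe that $C''$ inherits nonsingularity and nonnegative inverse from $C$ (the inverse of the Schur complement appears as a principal submatrix of $C^{-1} \ge 0$), and then invoke the standard equivalence: a matrix with nonpositive off-diagonal entries is an M-matrix iff it has a nonnegative inverse (or is a limit of such). Alternatively, one can avoid Schur complements entirely and argue directly: write $\det(C) = \det(tI - B)$ and expand, or use the fact that for the nonsingular case $\det(C)^{-1} = \det(C^{-1})$ and bound $\det(C^{-1})$ from below using $C^{-1} \ge 0$ together with $(C^{-1})_{ii} \ge 1/c_{ii}$ (which follows from $C C^{-1} = I$ and nonnegativity), combined with the Hadamard-type bound that for a nonnegative-inverse M-matrix the inverse satisfies $\det(C^{-1}) \ge \prod_i (C^{-1})_{ii}$ — but this last step is itself essentially the statement being proved, so the Schur complement induction is the more self-contained route.
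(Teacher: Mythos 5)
Your proof is correct, and it is genuinely more informative than what the paper provides: the paper does not prove Lemma \ref{sec3:lm2} at all, but simply cites Corollary 4.1.2 of Ando's survey on M-matrices. Your argument is the standard self-contained proof of this Hadamard--Fischer-type inequality. The case split is sound: when $t=\rho(B)$, Perron--Frobenius gives $\det(C)=0$ while $c_{ii}=t-b_{ii}\ge t-\rho(B)\ge 0$, so the bound is trivial; when $t>\rho(B)$ the Neumann series gives $C^{-1}\ge 0$ and $\det(C)>0$, and the Schur-complement induction goes through. The one point you flag as delicate --- that the Schur complement $C''$ is again a nonsingular M-matrix --- is handled correctly via the block-inverse formula ($(C'')^{-1}$ is a principal submatrix of $C^{-1}\ge 0$) together with the characterization of nonsingular M-matrices as Z-matrices with nonnegative inverse; this also gives $c''_{ii}>0$, which you implicitly need so that the termwise bound $0<c''_{ii}\le c_{i+1,i+1}$ can be multiplied out. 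You are also right to reject the alternative route through $\det(C^{-1})\ge\prod_i(C^{-1})_{ii}$ as circular. In the context of this paper, where the lemma is only applied to the specific matrices of Lemma \ref{sec3:lm1} (which are certified nonsingular M-matrices), your singular case is not even needed, but including it makes the lemma self-contained as stated.
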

\begin{proof}
See e.g., Corollary 4.1.2 of \cite{ando1980}.
\end{proof}

The following lemma adapted from \cite[Lemma 4.3]{re} asserts that a particular class of matrices are M-matrices.
\begin{lemma}\label{sec3:lm1}
Let $C=[c_{ij}]$ be a real matrix such that $c_{ii}=2$ and $c_{ij}\in\{0,-1\}$ if $i\ne j$. Assume that each row of $C$ has at most two $-1$'s and there is no principal submatrix of $C$ in which each row has exactly two $-1$'s. Then $C$ is a non-singular M-matrix.
\end{lemma}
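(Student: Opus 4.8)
The plan is to show that the hypotheses force $C$ to be a symmetrizable (indeed in this simple $0/-1$ pattern, essentially combinatorial) version of a Laplacian-type matrix, and then to verify condition (2) in the definition of an M-matrix by exhibiting $C = 2I - B$ with $B$ having spectral radius at most $2$. Condition (1) is immediate since the off-diagonal entries lie in $\{0,-1\}$. Writing $C = 2I - B$, the matrix $B = [b_{ij}]$ has $b_{ii}=0$ and $b_{ij}\in\{0,1\}$ for $i\ne j$, i.e. $B$ is the adjacency matrix of a graph $G$ on $n$ vertices. By hypothesis every vertex of $G$ has degree at most $2$, so $G$ is a disjoint union of paths and cycles. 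The non-singularity claim and the spectral radius bound both reduce to understanding $B$ on each connected component.

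The key step is to analyze the spectral radius of the adjacency matrix of a path $P_k$ and of a cycle $C_k$. For a path on $k$ vertices the eigenvalues are $2\cos(\pi j/(k+1))$, $j=1,\dots,k$, all of modulus strictly less than $2$; for a cycle on $k$ vertices the eigenvalues are $2\cos(2\pi j/k)$, $j=0,\dots,k-1$, whose maximum modulus equals $2$, attained by $j=0$ (and also $j=k/2$ when $k$ is even). Hence the spectral radius of $B$ is at most $2$ always, which already gives that $C = 2I-B$ is an M-matrix. However, a cycle component would make $C$ singular (the eigenvalue $2$ of $B$ gives eigenvalue $0$ of $C$). So the remaining point is to rule out cycle components using the second combinatorial hypothesis: a cycle component of $G$ is precisely a principal submatrix of $C$ (indexed by the vertices of that cycle) in which every row has exactly two $-1$'s. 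This is forbidden by assumption, so $G$ has no cycles, every component is a path, $\rho(B) < 2$, and $C = 2I - B$ is a non-singular M-matrix.

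I would therefore organize the write-up as: (i) observe $C = 2I - B$ with $B$ the adjacency matrix of a graph $G$ of maximum degree $\le 2$, so $G$ is a union of paths and cycles; (ii) recall/compute the adjacency spectrum of paths and cycles to get $\rho(B) \le 2$, with strict inequality on path components; (iii) invoke the ``no principal submatrix with two $-1$'s per row'' hypothesis to exclude cycle components, concluding $\rho(B) < 2$ and hence $C$ non-singular; (iv) assemble: off-diagonal sign condition plus $C = 2I - B$ with $t = 2 > \rho(B)$ gives the M-matrix property. The only mildly delicate point — and the one place I would be careful — is step (iii): one must check that the vertex set of a cycle component genuinely indexes a principal submatrix of $C$ of the forbidden type (it does, since within a cycle each vertex has exactly two neighbors, all inside the component, so each corresponding row of the submatrix has exactly two $-1$'s), and conversely that any such forbidden submatrix would force a cycle, which is where the degree-$\le 2$ structure is used. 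Everything else is standard.
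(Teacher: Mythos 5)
Your argument has a genuine gap: it silently assumes that $C$ (equivalently $B=2I-C$) is symmetric. The hypotheses of the lemma constrain only the \emph{rows} of $C$ (at most two $-1$'s per row, no principal submatrix with exactly two $-1$'s in each row); nothing forces $c_{ij}=c_{ji}$. Consequently $B$ is in general the adjacency matrix of a \emph{directed} graph with out-degree at most $2$ and unrestricted in-degrees, and such a digraph is not a disjoint union of paths and cycles, so the explicit spectra $2\cos(\pi j/(k+1))$ and $2\cos(2\pi j/k)$ that your whole argument rests on are unavailable. This is not a hypothetical worry: in the application (Theorem \ref{sec3:thm2}) the matrix $C$ records relations such as $2\b_1-\b_2-\b_3=\mathbf{0}$ together with $2\b_2=\a_j+\a_k$, which gives $c_{12}=-1$ but $c_{21}=0$, so the matrices the lemma must cover are genuinely non-symmetric. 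Your step (iii) (identifying the forbidden submatrices with $2$-regular induced subgraphs, i.e.\ unions of cycles) likewise only makes sense in the undirected setting.

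The paper avoids this entirely with a direct maximum-modulus eigenvector argument: for an eigenpair $B\a=\lambda\a$, looking at the set $I$ of indices where $|\alpha_i|$ is maximal, the row-sum bound gives $|\lambda|\le 2$, and equality forces every $i\in I$ to have exactly two out-neighbours, both lying in $I$ with maximal modulus --- producing exactly the forbidden principal submatrix. That argument needs no symmetry and no knowledge of path/cycle spectra. Your proof would be fine if the lemma were stated for symmetric $C$ (and in that special case it is an attractive, more structural argument), but as written it does not prove the statement, and the symmetric case is not the one the paper needs. To repair it you would either have to add a symmetrization step that you justify (there is no obvious one) or replace steps (ii)--(iii) with a Perron--Frobenius/Ger\v{s}gorin-type argument along the lines of the paper's.
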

\begin{proof}
Let us write $C=2I-B$, where $I$ is an identity matrix, and $B=[b_{ij}]$ is a matrix with entries being $0$ or $1$. We finish the proof by showing that the spectral radius of $B$ is less than $2$.

Let $\lambda$ be any eigenvalue of $B$ and $\a=(\alpha_i)_i$ a corresponding eigenvector such that $B\a=\lambda\a$. Let $\xi=\max_i\{|\alpha_i|\}$ and $I=\{i\mid |\alpha_i|=\xi\}$. For $i\in I$, let $T(i)=\{j\mid b_{ij}=1\}$. Then for each $i\in I$, we have
\begin{equation*}
|\lambda|\xi=|\lambda\alpha_i|=\Bigl|\sum_{j}b_{ij}\alpha_j\Bigr|=\Bigl|\sum_{j\in T(i)}\alpha_j\Bigr|\le 2\xi.
\end{equation*}
So $|\lambda|\le2$ for $\xi>0$. If $|\lambda|=2$, then for any $i\in I$, we have $|T(i)|=2$ and $|\alpha_j|=\xi$ for $j\in T(i)$ which implies $T(i)\subseteq I$. 
It follows that the principal submatrix of $C$ indexed by $I$ 
has exactly two $-1$'s in each row, which is impossible. 
Thus $|\lambda|<2$ as desired.
\end{proof}

Now we are ready to prove the promised result.
\begin{theorem}\label{sec3:thm2}
Let $\A=\{\a_1,\ldots,\a_{m}\}\subseteq\R^{m-1}$ be a trellis and $\b=\sum_{i=1}^{m}\frac{s_i}{\hat{s}}\a_i$ with $(s_i)_{i=1}^m\in(\N^*)^m,(s_1,\ldots,s_m)=1$. Then for any $(\A,\b)$-mediated set $\B$, one has $|\B|\ge\left\lceil\log_2(\hat{s})\right\rceil$.
\end{theorem}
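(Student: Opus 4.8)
The plan is to encode the defining equations of the mediated set as a linear system, recognize the coefficient matrix as a non-singular $M$-matrix, and conclude that $\hat{s}\mid\det M\le 2^{|\B|}$. Write $\B=\{\a_{m+1},\dots,\a_{m+n}\}$ with $\a_{m+1}=\b$ and $n=|\B|$, and pass to barycentric coordinates with respect to $\A$: identify each vertex $\a_i$ ($1\le i\le m$) with the standard basis vector $e_i\in\R^m$, and let $\mathbf{v}_k\in\R^m$ be the barycentric coordinate vector of $\a_{m+k}$, so that $\mathbf{v}_1=\tfrac{1}{\hat{s}}(s_1,\dots,s_m)$. Since the $m+n$ points of $\A\cup\B$ are distinct (those in $\B$ lie in $\Conv(\A)^{\circ}$, the vertices do not), and since each $\a_{m+k}$ is the average of two distinct points of $\A\cup\B$, for every $k$ there are distinct indices $i_k,j_k\in\{1,\dots,m+n\}\setminus\{m+k\}$ with $\a_{i_k}+\a_{j_k}=2\a_{m+k}$. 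Define $M\in\R^{n\times n}$ by $M_{kk}=2$ and, for $\ell\ne k$, $M_{k\ell}=-1$ if $m+\ell\in\{i_k,j_k\}$ and $M_{k\ell}=0$ otherwise, and define $V\in\R^{n\times m}$ whose $k$-th row has a $1$ in position $i$ for each $i\le m$ with $i\in\{i_k,j_k\}$. Reading the $n$ averaging equations in barycentric coordinates then yields the matrix identity $MP=V$, where $P\in\R^{n\times m}$ has rows $\mathbf{v}_1^{\mathsf{T}},\dots,\mathbf{v}_n^{\mathsf{T}}$.

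Next I would check that $M$ satisfies the hypotheses of Lemma \ref{sec3:lm1}: its diagonal entries equal $2$, its off-diagonal entries lie in $\{0,-1\}$ (they cannot be $-2$, as $i_k\ne j_k$), and each row carries at most two $-1$'s. The only delicate point is the condition that no principal submatrix of $M$ has exactly two $-1$'s in every row. Suppose $S\subseteq\{1,\dots,n\}$ violated this; then for each $k\in S$ both parents $\a_{i_k},\a_{j_k}$ of $\a_{m+k}$ would lie in the finite set of distinct points $\{\a_{m+\ell}:\ell\in S\}$, so every point of that set would be the midpoint of two distinct points of the set --- impossible, since an extreme point of its convex hull has no such representation. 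Hence $M$ is a non-singular $M$-matrix; being non-singular it has $\det M>0$ (a standard property, cf.\ \cite{ando1980}), and since $M$ has integer entries, $\det M$ is a positive integer, which by Lemma \ref{sec3:lm2} satisfies $\det M\le\prod_{k=1}^{n}M_{kk}=2^n$.

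Finally I would extract the denominator bound. By Cramer's rule, $\det M\cdot P=\operatorname{adj}(M)\,V\in\Z^{n\times m}$, so in particular $\det M\cdot\mathbf{v}_1\in\Z^m$. Because $(s_1,\dots,s_m)=1$ --- hence also $(s_1,\dots,s_m,\hat{s})=1$ --- the least common denominator of the entries of $\mathbf{v}_1=\tfrac{1}{\hat{s}}(s_1,\dots,s_m)$ equals $\hat{s}$, so $\hat{s}\mid\det M$. Combining, $\hat{s}\le\det M\le 2^n$, whence $n\ge\log_2\hat{s}$ and, $n$ being an integer, $|\B|=n\ge\lceil\log_2\hat{s}\rceil$. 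The step I expect to demand the most care is the verification of the ``no principal submatrix'' hypothesis of Lemma \ref{sec3:lm1} through the extreme-point argument, together with the small but essential bookkeeping that distinct indices in $\A\cup\B$ label distinct points.
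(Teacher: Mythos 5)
Your proposal is correct and follows essentially the same route as the paper: encode the averaging relations as a linear system with the coefficient matrix $M$ (the paper's $C$), rule out a principal submatrix with two $-1$'s per row, invoke Lemmas \ref{sec3:lm1} and \ref{sec3:lm2} to get $\det M\le 2^n$, and extract $\hat{s}\le\det M$. Your write-up is in fact slightly more explicit than the paper's at two points --- the extreme-point argument for the ``no principal submatrix'' condition and the Cramer's-rule/gcd argument showing $\hat{s}\mid\det M$ --- but these are elaborations of the same steps, not a different method.
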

\begin{proof}
Assume that $\B=\{\b_1,\ldots,\b_n\}$ with $\b_1=\b$ is an $(\A,\b)$-mediated set. By definition, for each $\b_i$, we have one of the following three equations holds for suitable subscripts $j,k\in\{1,\ldots,m\}$:
\begin{subequations}\label{mse}
\begin{align}
    2\b_i&=\a_j+\a_k,\\
    2\b_i-\b_j&=\a_k,\\
    2\b_i-\b_j-\b_k&=\mathbf{0}.\label{mse3}
\end{align}
\end{subequations}
Let us denote the coefficient matrix of \eqref{mse} by $C$ and let $B=[\b_1,\ldots,\b_n]^{\intercal}$ with $\b_i$'s being viewed as column vectors, so that we can rewrite \eqref{mse} in matrix form as $CB=A$, where $A$ is a matrix whose row vectors belong to the subspace generated by $\A$. It is not hard to see that $C$ satisfies the conditions of Lemma \ref{sec3:lm1}; only the assumption that there is no principal submatrix with exactly two $-1$'s in each row is not obviously fulfilled.
Suppose that $C$ has a principal submatrix indexed by $I$ with two $-1$'s in each row and let $\B'=\{\b_i\}_{i\in I}$. Then by construction, every point $\b_i$ in $\B'$ satisfies \eqref{mse3} with $j,k\in I$ and so is an average of two other points in $\B'$, which is however impossible for a finite set. Hence $C$ has no such principal submatrix.
Applying Lemma \ref{sec3:lm1}, we deduce that $C$ is M-matrix. In addition, by Lemma \ref{sec3:lm2}, $\det(C)\le 2^{n}$. Solve $CB=A$ for $\b$ and we obtain $\b=\frac{\sum_{i=1}^{m}r_i\a_i}{\det(C)}$ for some $r_i\in\N^*$, $i=1,\ldots,m$. It follows $\hat{s}\le\det(C)\le2^{n}$ which implies $|\B|=n\ge\left\lceil\log_2(\hat{s})\right\rceil$.
\end{proof}

From Theorem \ref{sec3:thm2}, we immediately obtain the following corollary. 
\begin{corollary}\label{sec3:thm3}
Let $s_1,\ldots,s_m\in\N^*$ be a tuple of integers with $(s_1,\ldots,s_m)=1$. Then 
\begin{equation}
    L(s_1,\ldots,s_m)\ge\left\lceil\log_2\left(\hat{s}\right)\right\rceil.
\end{equation}
\end{corollary}

The lower bound in Corollary \ref{sec3:thm3} is attainable as the following example shows.
\begin{example}
The weighted geometric mean inequality $x_1x_2^2x_3^3\ge x_4^{6}$ admits a simple second-order cone representation: $x_3x_5\ge x_4^2,x_2x_6\ge x_5^2,x_1x_5\ge x_6^2$.
\end{example}

\begin{remark}\label{sec3:rm1}
Combining Theorem \ref{sec3:thm1} with Corollary \ref{sec3:thm3}, we get
\begin{equation}\label{sec3:eq3}
	L(s_1,\ldots,s_m)\ge\max\,\{\left\lceil\log_2\left(\hat{s}\right)\right\rceil, m-1\}.
\end{equation}
\end{remark}

\begin{remark}
An $\A$-mediated set $\B$ is said to be \emph{isomorphic} to an $\A'$-mediated set $\B'$ if there are one-to-one maps $\A\rightarrow\A'$ and $\B\rightarrow\B'$ such that the average relationships \eqref{mse} are preserved under these maps. 

Suppose that $\A=\{\a_1,\ldots,\a_{m}\}\subseteq\R^{m-1}$ is a trellis and $\b=\sum_{i=1}^{m}\frac{s_i}{\hat{s}}\a_i$ with $(s_i)_{i=1}^m\in(\N^*)^m,(s_1,\ldots,s_m)=1$. We may define
\begin{equation*}
	\b'=(s_1,\ldots,s_{m-1})\in\R^{m-1},
\end{equation*}
and
\begin{equation*}
	\a_i'=\hat{s}\mathbf{e}_i\in\R^{m-1},i=1,\ldots,m-1,\a_{m}'=\mathbf{0}\in\R^{m-1},
\end{equation*}
so that
\begin{equation*}
	\b'=\sum_{i=1}^{m}\frac{s_i}{\hat{s}}\a_i'.
\end{equation*}
Here, $(\mathbf{e}_i)_{i=1}^{m-1}$ denotes the standard basis of $\R^{m-1}$.
In addition, let $\A'=\{\a_1',\ldots,\a_{m}'\}$. It can be seen that any $(\A,\b)$-mediated set is isomorphic to an $(\A',\b')$-mediated set and vise verse. Therefore, to study $(\A,\b)$-mediated sets, there is no loss of generality in assuming that the trellis $\A$ comprises the vertices of the standard simplex
and $\b$ is a lattice point lying in the relative interior of this simplex.
\end{remark}

\subsection{More general second-order cone representations}
In this subsection, we consider more general second-order cone representations for a weighted geometric mean inequality and derive lower bounds on the size of such second-order cone representations. Let $K$ be the $3$-dimensional rotated second-order cone. Given a convex cone $C$, we say that $C$ admits a $K^l$-lift if $C=\pi(K^l\cap L)$ where $L$ is an affine space and $\pi$ is a linear map \cite{fawzi2022lifting}.

\begin{theorem}
Let $s_1,\ldots,s_m\in\N^*$ be a tuple of integers with $(s_1,\ldots,s_m)=1$. The size of any second-order cone representation for the weighted geometric mean inequality $x_1^{s_1}\cdots x_m^{s_m}\ge x_{m+1}^{\hat{s}}$ is bounded by $\frac{m}{2}$ from below.
\end{theorem}
\begin{proof}
Let us denote $S\coloneqq\left\{(x_i)_{i=1}^{m+1}\in\R_{\ge0}^{m+1}\mid x_1^{s_1}\cdots x_m^{s_m}\ge x_{m+1}^{\hat{s}}\right\}$. Note that $S$ contains the nonnegative orthant $\R_{\ge0}^m$ as a linear slice (with $x_{m+1}=0$). Therefore if $S$ admits a $K^l$-lift, then so does $\R_{\ge0}^m$. Now since cone lifts induce order embeddings of face posets (\cite[Section 5.1]{fawzi2022lifting}) and the longest chains of non-empty faces of $\R_{\ge0}^m$ and $K^l$ are respectively of length $m+1,2l+1$, we deduce $m+1\le 2l+1$, equivalently, $l\ge\frac{m}{2}$ as desired. 
\end{proof}

\section{A binary tree representation of successive minimum mediated sequences}\label{sec3}
In this section, we focus particularly on the case of one dimensional mediated sets (equivalently, the case of bivariate weighted geometric mean inequalities). For the sake of conciseness, we use the terminology \emph{mediated sequences} to refer to one dimensional mediated sets. More concretely, given an integer $p>0$, a set of integers $A\subseteq\N$ is a \emph{$p$-mediated sequence}, if every number in $A$ is an average of two distinct numbers in $A\cup\{0,p\}$. A $p$-mediated sequence containing a given number $q$ is called a \emph{$(p,q)$-mediated sequence}. As being a mediated sequence is not changed by a scaling, there is no loss of generality in assuming $(p,q)=1$. A \emph{minimum} $(p,q)$-mediated sequence is a $(p,q)$-mediated sequence with the smallest cardinality. We sometimes omit the prefixes and simply say (minimum) mediated sequences if there is no need to mention the specific $p,q$.
\begin{example}
The set $A=\{2,4,5,8\}$ is a minimum $(11,2)$-mediated sequence.
\end{example}
Given $p,q\in\N$ with $0<q<p$, $(p,q)=1$, there is an algorithm (Algorithm \ref{sec4:alg1}) for computing a minimum $(p,q)$-mediated sequence.

\begin{algorithm}
	\renewcommand{\algorithmicrequire}{\textbf{Input:}}
	\renewcommand{\algorithmicensure}{\textbf{Output:}}
	\caption{}\label{sec4:alg1}
	\begin{algorithmic}[1]
		\REQUIRE
		Two integers $p,q$ satisfying $0<q<p$ and $(p,q)=1$
		\ENSURE
		A minimum $(p,q)$-mediated sequence
		\STATE $l\leftarrow\left\lceil\log_2\left(p\right)\right\rceil$;
		\STATE $s_1\leftarrow q$, $s_2\leftarrow p-q$, $s_3\leftarrow 2^l-p$;
		\STATE $t_1\leftarrow p$, $t_2\leftarrow 0$, $t_3\leftarrow q$;
	    \FOR{$k\leftarrow 1$ to $l$}
	    \STATE Find $1\le i\ne j\le 3$ such that $s_i\le s_j$ are odd numbers;
	    \STATE $q_k\leftarrow\frac{t_i+t_j}{2}$, $t_i\leftarrow q_k$, $r\leftarrow\{1,2,3\}\setminus\{i,j\}$;
	    \STATE $s_j\leftarrow\frac{s_j-s_i}{2}, s_r\leftarrow\frac{s_r}{2}$;
	    \ENDFOR
		\RETURN $\{q_k\}_{k=1}^l$;
	\end{algorithmic}
\end{algorithm}

\begin{theorem}\label{sec4:thm1}
Algorithm \ref{sec4:alg1} is correct.
\end{theorem}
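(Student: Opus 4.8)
The plan is to verify two things about Algorithm \ref{sec4:alg1}: first, that it is well-defined (i.e., at each iteration of the \texttt{for} loop a valid pair $i\ne j$ with $s_i\le s_j$ both odd can be found), and second, that the output set $\{q_k\}_{k=1}^l$ is indeed a $(p,q)$-mediated sequence, which together with the lower bound $l=\lceil\log_2 p\rceil$ from Theorem \ref{sec3:thm3} (applied with $m=2$, $\hat s=p$) forces minimality. The guiding invariant I would track is that after iteration $k$ the triple $(s_1,s_2,s_3)$ records, in a suitably scaled sense, the ``remaining work'': concretely I expect that at the start of iteration $k$ one has $s_1+s_2+s_3=2^{l-k+1}$ and $t_1 s_1' + t_2 s_2' + t_3 s_3'$ stays proportional to a fixed combination, where the key point is that $q$ is expressible as $q = (s_1 t_1 + s_2 t_2 + s_3 t_3)/2^{l-k+1}$ throughout (initially $s_1\cdot p + s_2\cdot 0 + s_3\cdot q = qp + q(2^l-p) = q2^l$, consistent with $l-k+1 = l$).

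First I would set up this invariant carefully and check it is preserved by lines 6--7: when we pick $i,j$ with $s_i\le s_j$ odd, set $q_k=(t_i+t_j)/2$, replace $t_i$ by $q_k$, and update $s_j\leftarrow(s_j-s_i)/2$, $s_r\leftarrow s_r/2$, $s_i$ unchanged. One checks $s_j-s_i$ is even so $(s_j-s_i)/2\in\N$; I must also argue $s_r$ is even at this moment. This is where the parity bookkeeping matters: since $s_1+s_2+s_3 = 2^{l-k+1}$ is even and $s_i,s_j$ are both odd, $s_r$ is even, so the update is integral. The new sum is $s_i + (s_j-s_i)/2 + s_r/2 = (s_i + s_j + s_r)/2 = 2^{l-k}$, so the sum invariant persists. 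For the linear-combination invariant, substitute: the new value of $s_i t_i^{\text{new}} + s_j^{\text{new}} t_j + s_r^{\text{new}} t_r$ equals $s_i (t_i+t_j)/2 + (s_j-s_i)/2\, t_j + (s_r/2) t_r = (s_i t_i + s_j t_j + s_r t_r)/2$, so dividing by the halved sum keeps $q$ unchanged; after $l$ steps the sum is $2^0=1$ and exactly one of the $s$'s is odd — in fact $s_1=s_2=s_3$ configuration must collapse — giving $q$ as an honest integer combination, which is consistent.

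Second, I would establish that line 5 never fails, i.e., that at the start of each iteration at least two of $s_1,s_2,s_3$ are odd (equivalently, not all of them are even, given the sum is a positive even number $2^{l-k+1}\ge 2$, so the number of odd ones among three is even, hence $0$ or $2$; I need to rule out $0$). Ruling out the all-even case is the crux: if all three were even we could not have reduced, but more to the point the gcd condition $(p,q)=1$ should propagate to $(s_1,s_2,s_3)=1$ (initially $\gcd(q, p-q, 2^l-p) = \gcd(q,p,2^l)$; since $\gcd(p,q)=1$ this gcd divides $\gcd(2^l, q)\cdot(\dots)$ — I'd check it is $1$ unless $q$ and $2^l$ share a factor, but $\gcd(q,2^l-p,\dots)$... this needs a short lemma, and it's the delicate point). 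If $\gcd(s_1,s_2,s_3)$ stays $1$ then they cannot all be even, so the odd pair exists; and the halving operations do not obviously preserve gcd $=1$, so I'd instead maintain the weaker but sufficient statement ``not all $s_i$ even'' directly: if $s_j-s_i$ and $s_r$ became the only survivors and both even while $s_i$ odd, the sum parity still forces an odd one. Let me phrase it as: the number of odd entries is always exactly $2$ as long as the sum exceeds $1$. I would prove this by induction using the parity of the update, which is the main obstacle and the place to be most careful.

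Finally, I would interpret the output: each $q_k$ is defined as $(t_i+t_j)/2$ where $t_i,t_j$ at that moment are each either one of $\{0,p\}$, the target $q$, or a previously produced $q_{k'}$ with $k'<k$. Hence every $q_k$ is the average of two earlier elements of $\{0,p\}\cup\{q_{k'}:k'<k\}$, and I must check the two are distinct (they are, since $s_i\le s_j$ are both odd and $s_i = s_j$ would force, by the gcd/parity argument, that we are at the last step with all three equal to $1$ — I'd handle that boundary case separately, noting $q_l = (t_i+t_j)/2$ with $t_i=t_j$ only if the corresponding points coincide, which cannot happen for distinct slots holding $p$, $0$, and the accumulated value). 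Also one must confirm $q$ itself appears among the $q_k$: since $t_3$ starts at $q$ and the invariant shows $q$ is recovered as the final combination, the slot-$3$ value is touched and $q$ (or rather, the value that equals $q$) is produced. Combining: $\{q_k\}_{k=1}^l$ is a $(p,q)$-mediated sequence of size $l = \lceil\log_2 p\rceil$, which meets the lower bound of Theorem \ref{sec3:thm3}, so it is minimum, completing the proof.
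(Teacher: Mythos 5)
Your proposal is correct and follows essentially the same route as the paper: the same two invariants ($s_1+s_2+s_3=2^{l-k}$ and $s_1t_1+s_2t_2+s_3t_3=2^{l-k}q$ after $k$ iterations), verified by the same induction, with the output then shown to be a $(p,q)$-mediated sequence of size $l$ and minimality forced by the lower bound of Theorem~\ref{sec3:thm2} (equivalently Theorem~\ref{sec3:thm3}). If anything you are more careful than the paper on the two points it asserts without argument --- the existence of the odd pair at Step 5 (your parity count, ``sum even plus the untouched $s_i$ odd forces exactly two odd entries,'' is the right argument; the gcd digression is unnecessary) and the distinctness of the two averaged values --- so there is no gap relative to the paper's own proof.
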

\begin{proof}
Denote the initial values of $s_1,s_2,s_3$, $t_1,t_2,t_3$ respectively by $s_1^{0},s_2^{0},s_3^{0}$, $t_1^{0},t_2^{0},t_3^{0}$, and the values after the $k$-th iteration of the loop respectively by $s_1^{k},s_2^{k},s_3^{k}$, $t_1^{k},t_2^{k},t_3^{k}$. We prove that
\begin{equation}\label{sec4:eq1}
	2^{l-k}=s_1^k+s_2^k+s_3^k\text{ and }2^{l-k}q=s_1^kt_1^k+s_2^kt_2^k+s_3^kt_3^k
\end{equation}
hold true for $k=0,1,\ldots,l$ by induction on $k$. By initialization, we clearly have $2^{l}=q+p-q+2^l-p=s_1^0+s_2^0+s_3^0$ and $2^lq=q\cdot p+(p-q)\cdot0+(2^l-p)\cdot q=s_1^0t_1^0+s_2^0t_2^0+s_3^0t_3^0$. Assume 
that $2^{l-k}=s_1^k+s_2^k+s_3^k$ and $2^{l-k}q=s_1^kt_1^k+s_2^kt_2^k+s_3^kt_3^k$ are true for some $k\ge0$. Now consider the case of $k+1$. We note first that at Step 5 of Algorithm \ref{sec4:alg1}, such $i,j$ indeed exist by construction.
Then, $s_1^{k+1}+s_2^{k+1}+s_3^{k+1}=s_i^k+\frac{s_j^k-s_i^k}{2}+\frac{s^k_r}{2}=\frac{1}{2}(s_1^k+s_2^k+s_3^k)=2^{l-k-1}$ and $s_1^{k+1}t_1^{k+1}+s_2^{k+1}t_2^{k+1}+s_3^{k+1}t_3^{k+1}=s_i^k\cdot\frac{t_i^k+t_j^k}{2}+\frac{s_j^k-s_i^k}{2}\cdot t_j^k+\frac{s^k_r}{2}\cdot t_r^k=\frac{1}{2}(s_1^kt_1^k+s_2^kt_2^k+s_3^kt_3^k)=2^{l-k-1}q$. So we complete the induction.

Letting $k=l$ in \eqref{sec4:eq1}, we obtain $1=s_1^l+s_2^l+s_3^l$ and $q=s_1^lt_1^l+s_2^lt_2^l+s_3^lt_3^l$. Because $s_1^l,s_2^l,s_3^l$ are nonnegative integers, the equality $1=s_1^l+s_2^l+s_3^l$ implies $s_i^l=1,s_j^l=s_r^l=0$, and hence $q=t_i^l=q_l$. From this and by construction, we see that $\{q_k\}_{k=1}^l$ is indeed a $(p,q)$-mediated sequence. Moreover, by Theorem \ref{sec3:thm2}, $\{q_k\}_{k=1}^l$ is minimum. Thus we have proved the correctness of Algorithm \ref{sec4:alg1}.
\end{proof}

\begin{remark}
Algorithm \ref{sec4:alg1} can be readily adapted to produce an optimal simple second-order cone representation for the bivariate weighted geometric mean inequality $x_1^{q}x_2^{p-q}\ge x_{3}^{p}$ with $0<q<p$.
\end{remark}

\begin{remark}
The essence of Algorithm \ref{sec4:alg1} has appeared in the proof of Proposition 5 of \cite{morenko2013p} in the context of second-order cone representations for trivariate weighted geometric mean inequalities $x_1^{s_1}x_2^{s_2}x_3^{s_3}\ge x_{4}^{2^l}$ with $s_1+s_2+s_3=2^l$.
\end{remark}

Note that for integers $0<q<p$, $L(q,p-q)$ denotes the cardinality of a minimum $(p,q)$-mediated sequence. By Theorem \ref{sec4:thm1} we immediately obtain the exact value of $L(q,p-q)$, which resolves a conjecture concerning the value of $L(q,p-q)$ proposed in \cite{magron2023sonc}.
\begin{corollary}\label{sec4:thm2}
For integers $0<q<p$ with $(p,q)=1$, it holds 
\begin{equation}
    L(q,p-q)=\left\lceil\log_2\left(p\right)\right\rceil.
\end{equation}
\end{corollary}

Corollary \ref{sec4:thm2} allows us to further provide an upper bound on $L(s_1,\ldots,s_m)$.
\begin{corollary}\label{sec4:thm4}
Let $s_1,\ldots,s_m\in\N^*$ be a tuple of integers with $(s_1,\ldots,s_m)=1$. Then 
\begin{equation}
L(s_1,\ldots,s_m)\le\min_{\sigma\in S_m}\left\{ \sum_{i=1}^{m-1}\left\lceil\log_2\left(\frac{\sum_{j=i}^{m}s_{\sigma(j)}}{\left(\sum_{j=i}^{m}s_{\sigma(j)},s_{\sigma(i)}\right)}\right)\right\rceil\right\},
\end{equation}
where $S_m$ is the symmetry group of $\{1,\ldots,m\}$.
\end{corollary}
\begin{proof}
The conclusion follows by iteratively using
\begin{equation*}
	x_1^{s_1}\cdots x_m^{s_m}\ge x_{m+1}^{\hat{s}} \iff \exists y\ge0\text{ s.t. }x_1^{s_1}y^{\sum_{i=2}^ms_i}\ge x_{m+1}^{\hat{s}},x_2^{s_2}\cdots x_m^{s_m}\ge y^{\sum_{i=2}^ms_i}
\end{equation*}
and applying Corollary \ref{sec4:thm2} to the bivariate weighted geometric mean inequality arising at each iteration.
\end{proof}

\begin{example}
By Corollary \ref{sec3:thm3}, $L(4,3,2)\ge4$. By Corollary \ref{sec4:thm4}, $L(4,3,2)\le4$. It follows $L(4,3,2)=4$.
\end{example}

We notice that Algorithm \ref{sec4:alg1} can be readily adapted to produce an optimal simple second-order cone representation for the inequality $x_1^{s_1}x_2^{s_2}x_3^{s_3}\ge x_{4}^{2^l}$ with $s_1+s_2+s_3=2^l$, and so obtain the following corollary.
\begin{corollary}\label{sec4:cor1}
For integers $s_1,s_2,s_3\in\N^*$ with $(s_1,s_2,s_3)=1$, if $s_1+s_2+s_3=2^l$ for some $l\in\N$, then 
\begin{equation}
	L(s_1,s_2,s_3)=l.
\end{equation}
\end{corollary}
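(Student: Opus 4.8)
The plan is to establish the two inequalities $L(s_1,s_2,s_3)\le l$ and $L(s_1,s_2,s_3)\ge l$ separately, the latter being immediate from the general lower bound already proved. For the lower bound, observe that since $(s_1,s_2,s_3)=1$ and $s_1+s_2+s_3=2^l=\hat{s}$, Theorem \ref{sec3:thm3} gives $L(s_1,s_2,s_3)\ge\lceil\log_2(2^l)\rceil=l$. So the whole content is in producing a second-order cone representation (equivalently, an $\A$-mediated set) of size exactly $l$.

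For the upper bound, I would adapt Algorithm \ref{sec4:alg1} to the trivariate case, as the remark preceding the corollary and the remark after Theorem \ref{sec4:thm1} both suggest. Concretely, with $\A=\{\a_1,\a_2,\a_3\}$ a trellis and $\a_4=\sum_{i=1}^3\frac{s_i}{2^l}\a_i$, I would run a loop of $l$ steps maintaining a triple $(s_1^k,s_2^k,s_3^k)$ of nonnegative integers together with points $(t_1^k,t_2^k,t_3^k)$ (each either an original vertex or a previously constructed mediated point), initialized by $s_i^0=s_i$, $t_i^0=\a_i$. The invariant to maintain is $s_1^k+s_2^k+s_3^k=2^{l-k}$ together with $\sum_i s_i^k t_i^k = 2^{l-k}\a_4$, exactly mirroring \eqref{sec4:eq1}. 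At each step, because the $s_i^k$ sum to a power of two greater than $1$, not all three can be odd and not all three can be even (their sum would be odd or, respectively, they could all be halved — one checks at least two are odd, actually exactly two are odd since the sum is even and not all even is ruled out by $\gcd$-type reasoning propagated through the loop), so one may pick $i\ne j$ with $s_i^k\le s_j^k$ both odd; set the new mediated point $\a_{4+k}=\frac{t_i^k+t_j^k}{2}$, update $t_i\leftarrow\a_{4+k}$, and halve appropriately: $s_j\leftarrow\frac{s_j-s_i}{2}$, $s_r\leftarrow\frac{s_r}{2}$ for $r$ the remaining index. The invariant is preserved by the same one-line computation as in the proof of Theorem \ref{sec4:thm1}. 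After $l$ steps the invariant forces $s_1^l+s_2^l+s_3^l=1$ with nonnegative integer entries, so exactly one $s_i^l$ equals $1$ and then $\a_4=t_i^l$, which is the last point constructed (or, in degenerate sub-cases, an earlier one). By construction every point $\a_{4+k}$ is an average of two points already in the list, so $\{\a_4,\a_5,\ldots,\a_{4+l-1}\}$ (after discarding duplicates, which only helps) is an $\A$-mediated set containing $\a_4$ of cardinality at most $l$, giving $L(s_1,s_2,s_3)\le l$.

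Combining the two bounds yields $L(s_1,s_2,s_3)=l$. The main obstacle is the bookkeeping at Step analogous to Step 5: one must verify that a valid pair $(i,j)$ of odd-valued indices always exists throughout the loop, i.e., that $s_i^k,s_j^k$ cannot be "all even" at any stage — otherwise the halving step is not well-defined and the induction breaks. This is exactly the point where the trivariate case genuinely differs from the bivariate Algorithm \ref{sec4:alg1}, which starts with a padding term $s_3=2^l-p$ precisely to guarantee this; here the hypothesis $s_1+s_2+s_3=2^l$ plays that role, and I would argue that the invariant $s_1^k+s_2^k+s_3^k=2^{l-k}$ together with the coprimality $(s_1^k,s_2^k,s_3^k)=1$ for $k<l$ (which itself needs to be tracked: if all three were even at some step the sum would be divisible by $2^{l-k}$ but also the gcd argument would contradict the reduced form carried from the start) forces at least one, hence — parity of the sum being even for $k<l$ — at least two of them to be odd. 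Once this well-definedness is secured, everything else is the routine induction already carried out for Theorem \ref{sec4:thm1}.
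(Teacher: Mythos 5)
Your proposal is correct and follows essentially the same route as the paper, which simply observes that Algorithm \ref{sec4:alg1} adapts to the trivariate case with $\hat{s}=2^l$ and invokes Theorem \ref{sec3:thm3} for the matching lower bound. You in fact supply more detail than the paper does on the one point requiring care (that an odd pair $s_i\le s_j$ exists at every iteration); the cleanest justification there is simply that the chosen $s_i$ is left unchanged and hence stays odd, so together with the even sum $2^{l-k}$ exactly two entries are odd before each of the first $l$ iterations.
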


The minimum mediated sequence produced by Algorithm \ref{sec4:alg1} actually has a special structure which leads to the next definition.
\begin{definition}\label{sec4:def1}
For integers $0<q<p$ with $(p,q)=1$, a minimum $(p,q)$-mediated sequence $A=\{q_1,\ldots,q_l\}$ with $l=\left\lceil\log_2\left(p\right)\right\rceil$ is \emph{successive} if it can be sorted in such a way that 
\begin{enumerate}[(1)]
    \item $q_i=\frac{q_{i-1}+t_{i}}{2}$ for $i=1,\ldots,l$,
    \item $q_l=q$,
\end{enumerate}
where $t_i\in\{0,p,q,q_1,\ldots,q_{i-2}\}$ for $i=1,\ldots,l$ and $q_{0}\in\{p,q\}$.
\end{definition}

Since the minimum mediated sequence output by Algorithm \ref{sec4:alg1} is successive by construction, we know that successive minimum $(p,q)$-mediated sequences exist for any integers $0<q<p$ with $(p,q)=1$.

A successive minimum mediated sequence has a distinguished property, that is, it can be represented by a particular ``binary tree". To get a quick flavor of this fact, let us begin with an illustrative example. Let $p=57,q=11$, and $A=\{34,17,37,27,22,11\}$. Noting
\begin{equation}\label{sec5:eq0}
    34=\frac{p+q}{2}, 17=\frac{34}{2}, 37=\frac{17+p}{2}, 27=\frac{37+17}{2}, 22=\frac{27+17}{2}, q=\frac{22}{2},
\end{equation}
we see that $A$ is a successive minimum $(p,q)$-mediated sequence. From \eqref{sec5:eq0}, one can easily get the following iterated fraction representation of $q$:
\begin{equation}\label{sec5:eq5}
     q=\dfrac{\dfrac{\dfrac{\dfrac{p+q}{4}+p}{2}+\dfrac{p+q}{4}}{2}+\dfrac{p+q}{4}}{4}.
\end{equation}
The mediated sequence $A$ can be recovered from \eqref{sec5:eq5}, which is visualized by the binary tree displayed in Figure \ref{sec5:fg1}.

\begin{figure}
	\centering
	{\footnotesize
	\begin{tikzpicture}
		\node at (0,0) {$11$};
		\draw (0,0) circle (0.25);
		\fill[fill=red,fill opacity=0.2] (0,0) circle (0.25);
		\draw (0,-0.25)--(0,-0.75);
		\node at (0,-1) {$22$};
		\draw (0,-1) circle (0.25);
		\fill[fill=red,fill opacity=0.2] (0,-1) circle (0.25);
		\draw (-0.15,-1.2)--(-1,-1.75);
		\draw (0.15,-1.2)--(1,-1.75);
		\node at (-1,-2) {$27$};
		\draw (-1,-2) circle (0.25);
		\fill[fill=red,fill opacity=0.2] (-1,-2) circle (0.25);
		\node at (1,-2) {$17$};
		\draw (1,-2) circle (0.25);
		\draw (1,-2.25)--(1,-2.75);
		\node at (1,-3) {$34$};
		\draw (1,-3) circle (0.25);
		\draw (0.85,-3.2)--(0.45,-3.75);
		\draw (1.15,-3.2)--(1.55,-3.75);
		\node at (0.45,-4) {$p$};
		\draw (0.45,-4) circle (0.25);
		\node at (1.55,-4) {$q$};
		\draw (1.55,-4) circle (0.25);
		\draw (-1.15,-2.2)--(-1.65,-2.75);
		\draw (-0.85,-2.2)--(-0.35,-2.75);
		\node at (-0.35,-3) {$17$};
		\draw (-0.35,-3) circle (0.25);
		\draw (-0.35,-3.25)--(-0.35,-3.75);
		\node at (-0.35,-4) {$34$};
		\draw (-0.35,-4) circle (0.25);
		\draw (-0.5,-4.2)--(-0.9,-4.75);
		\draw (-0.2,-4.2)--(0.2,-4.75);
		\node at (-0.9,-5) {$p$};
		\draw (-0.9,-5) circle (0.25);
		\node at (0.2,-5) {$q$};
		\draw (0.2,-5) circle (0.25);
		\node at (-1.65,-3) {$37$};
		\draw (-1.65,-3) circle (0.25);
		\fill[fill=red,fill opacity=0.2] (-1.65,-3) circle (0.25);
		\draw (-1.8,-3.2)--(-2.2,-3.75);
		\draw (-1.5,-3.2)--(-1.1,-3.75);
		\draw (-1.1,-4) circle (0.25);
		\node at (-1.1,-4) {$p$};
		\draw (-2.2,-4) circle (0.25);
		\node at (-2.2,-4) {$17$};
		\fill[fill=red,fill opacity=0.2] (-2.2,-4) circle (0.25);
		\draw (-2.2,-4.25)--(-2.2,-4.75);
		\draw (-2.2,-5) circle (0.25);
		\node at (-2.2,-5) {$34$};
		\fill[fill=red,fill opacity=0.2] (-2.2,-5) circle (0.25);
		\draw (-2.35,-5.2)--(-2.75,-5.75);
		\draw (-2.05,-5.2)--(-1.65,-5.75);
		\node at (-2.75,-6) {$p$};
		\draw (-2.75,-6) circle (0.25);
		\node at (-1.65,-6) {$q$};
		\draw (-1.65,-6) circle (0.25);
	\end{tikzpicture}}
	\caption{The binary tree representation of a successive minimum $(p,q)$-mediated sequence with $p=57,q=11$.}\label{sec5:fg1}
\end{figure}

Taking inspirations from the above example, we can construct a binary tree representation for any successive minimum $(p,q)$-mediated sequence $A=\{q_i\}_{i=1}^l$, where $A$ is sorted according to Definition \ref{sec4:def1}. For simplicity, we assume from now on that $p,q$ are both odd\footnote{The other cases can be easily converted to this case.}. We shall describe the construction in an iterative manner.
Let $T_1$ be the binary tree consisting of a root node labelled by $q_1$ along with two children labelled by $p$ and $q$, respectively. For $2\le i\le l$, we iteratively define
\begin{equation}
	T_i=\begin{cases}
		\cC(q_i,T_{i-1}), &\text{ if }q_i=\frac{q_{i-1}}{2},\\
		\cC(q_i,T_{i-1}, p), &\text{ if }q_i=\frac{q_{i-1}+p}{2},\\
		\cC(q_i,T_{i-1}, q), &\text{ if }q_i=\frac{q_{i-1}+q}{2},\\
		\cC(q_i,T_{i-1}, T_j), &\text{ if }q_i=\frac{q_{i-1}+q_{j}}{2},
	\end{cases}
\end{equation}
where $\cC(q_i,T_{i-1})$ denotes the binary tree obtained by adding a root node labelled by $q_i$ connected to $T_{i-1}$ (viewed as a left subtree); $\cC(q_i,T_{i-1}, p)$ (resp. $\cC(q_i,T_{i-1}, q)$) denotes the binary tree obtained by adding a root node labelled by $q_i$ connected to the left to $T_{i-1}$ and to the right to a leaf node labelled by $p$ (resp. $q$); $\cC(q_i,T_{i-1}, T_j)$ denotes the binary tree obtained by adding a root node labelled by $q_i$ connected to the left to $T_{i-1}$ and to the right to $T_{j}$. We then say that $T_l$ is the \emph{binary tree representation} of $A$.
For a binary tree representation $T$, we can naturally define the height of any node such that the node at the bottom is of height $0$. The height of $T$ or its subtree is the height of the root node. We use $p(T)$ (resp. $q(T)$) to denote the set of heights of leaf nodes labelled by $p$ (resp. $q$) of $T$.

\begin{theorem}\label{sec4:thm3}
Suppose that $T$ is the binary tree representation of a successive minimum $(p,q)$-mediated sequence $A$. Then the following hold:
\begin{enumerate}[(1)]
\item The height of $T$ is $\left\lceil\log_2(p)\right\rceil$;
\item Any leaf node of $T$ is labelled by either $p$ or $q$;
\item The root node of $T$ is labelled by $q$;
\item The root node of the left subtree of $T$ of height $i$ is labelled by $q_i$;
\item Any right subtree of $T$ either is a leaf node or coincides with some left subtree;
\item The label of any non-leaf node is the average of labels of its children or the half of the label of its child if there is only one child;
\item $\sum_{j\in p(T)}2^{j}=q$ and $\sum_{j\in q(T)}2^j=2^{\left\lceil\log_2(p)\right\rceil}-p$.
\end{enumerate}
\end{theorem}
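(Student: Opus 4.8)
The plan is to prove all seven items by a single induction on the recursive construction of $T_l$ from $T_1$, exploiting the fact that $A$ is successive (Definition \ref{sec4:def1}) so that the sorting $q_1,\ldots,q_l$ with $q_i=\frac{q_{i-1}+t_i}{2}$, $q_0\in\{p,q\}$, $t_i\in\{0,p,q,q_1,\ldots,q_{i-2}\}$, and $q_l=q$ is available. First I would set up the base case: $T_1$ has root $q_1=\frac{q_0+t_1}{2}$ with $q_0,t_1\in\{p,q\}$, and since $p,q$ are assumed odd while $q_1$ must be an integer, the only option is $q_1=\frac{p+q}{2}$; so $T_1$ is the root $q_1$ with children $p,q$, which visibly satisfies (2), (4), (5), (6) and the height-related parts at level $1$. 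For the inductive step I would treat the four cases in the definition of $T_i$ uniformly: $T_i=\cC(q_i,T_{i-1},\star)$ where $\star$ is empty, a leaf $p$, a leaf $q$, or a previously built tree $T_j$ with $j\le i-2$. Items (2), (5), (6) are essentially immediate from the construction together with the inductive hypothesis — a new leaf is always labelled $p$ or $q$; a new right subtree is either such a leaf or the already-constructed $T_j$, which by induction is a left subtree of $T_i$ at height $j$; and the new root $q_i$ equals $\frac{q_{i-1}+t_i}{2}$ by the recursion, with $q_{i-1}$ being the label of the root of $T_{i-1}$ (its left child) and $t_i$ the label of the right child.

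The main structural claim to nail down is that each $T_i$ has height exactly $i$, and more precisely that the root of the left subtree of $T_l$ at height $i$ is $q_i$ — this is item (4), and (1) then follows since the full tree $T=T_l$ has height $l=\lceil\log_2(p)\rceil$ by Theorem \ref{sec4:thm2}, and (3) follows from $q_l=q$. For (4) I would observe that $T_i$ is built by stacking a new root on $T_{i-1}$ as its left subtree, so the left subtree of $T_i$ at height $i$ is $T_i$ itself with root $q_i$, at height $i-1$ is $T_{i-1}$ with root $q_{i-1}$, and so on down to $T_1$ with root $q_1$; a short induction confirms the heights are exactly $0,1,\ldots,i$ and the root is always the last added node. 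Care is needed that the right subtrees $T_j$ attached along the way are shorter than the current tree (they are, since $j\le i-2<i-1$), so they never increase the height beyond what the left spine dictates.

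The genuinely substantive part is item (7): $\sum_{j\in p(T)}2^j=q$ and $\sum_{j\in q(T)}2^j=2^{\lceil\log_2 p\rceil}-p$. Here I would prove, by induction on $i$, the pair of identities $\sum_{j\in p(T_i)}2^{j} = q \cdot 2^{\,i-l}\cdot(\text{something})$ — more cleanly, I would track the quantities $P_i:=\sum_{j\in p(T_i)}2^j$ and $Q_i:=\sum_{j\in q(T_i)}2^j$ and show they satisfy the recursion dictated by the four cases. When we form $\cC(q_i,T_{i-1})$, every leaf of $T_{i-1}$ gains one in height, so $P_i=2P_{i-1}$, $Q_i=2Q_{i-1}$; when we attach a leaf $p$ (resp. $q$) at height $0$, we get $P_i=2P_{i-1}+1$, $Q_i=2Q_{i-1}$ (resp. $P_i=2P_{i-1}$, $Q_i=2Q_{i-1}+1$); and when we attach $T_j$, whose root sits at height $i-1-i_0$ for the appropriate offset — actually $T_j$ is placed with its root as the right child of $q_i$, i.e. at height $i-1$, but $T_j$ natively has height $j$, so its leaves, originally at the heights recorded in $p(T_j),q(T_j)$, are shifted up by $i-1-j$ — giving $P_i=2P_{i-1}+2^{\,i-1-j}P_j$ and similarly for $Q$. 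The key bridge is to relate $P_i,Q_i$ back to $q_i$ and $p$: I expect the clean invariant to be $P_i = q_i\cdot 2^{\,?} \pmod{\text{powers}}$; concretely I would guess and verify $P_i + (\text{carry terms}) $ reconstructs the continued-fraction expansion of $q$ exhibited in equation \eqref{sec5:eq5}, so that at $i=l$ one gets exactly $P_l=q$ and $Q_l=2^l-p$ (noting $q_l=q$ and that the total "mass" $P_i+Q_i$ should track $2^i$ minus appropriate multiples of $p$, using $2^{l-k}=s_1^k+s_2^k+s_3^k$ from the proof of Theorem \ref{sec4:thm1}). The main obstacle is bookkeeping the height shifts of the attached subtrees $T_j$ correctly and choosing the right invariant so that the recursion closes; I would double-check it against the worked example $p=57,q=11$ in Figure \ref{sec5:fg1}, where $p(T)=\{0,1,3,5\}$ gives $1+2+8+32=43\ne 11$, so in fact the bookkeeping must be subtler — the heights in Figure \ref{sec5:fg1} suggest reading from the correct reference level, and pinning down exactly which level is height $0$ for the identity in (7) to hold is precisely where the proof must be most careful.
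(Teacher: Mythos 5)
Items (1)--(6) of your proposal are fine and match the paper, which dismisses them as immediate from the construction; your extra care about the attached copies of $T_j$ ($j\le i-2$) not disturbing the left spine's height is correct. The problem is item (7), which is the only substantive part of the theorem, and there your proposal has a genuine, self-acknowledged gap: you never find the invariant that closes the induction, and your sanity check against the $p=57,q=11$ example fails. The missing idea is to prove, by induction on $i$, the single identity
\begin{equation*}
2^i q_i=\Bigl(\sum_{j\in p(T_i)}2^{j}\Bigr)p+\Bigl(\sum_{j\in q(T_i)}2^j\Bigr)q ,
\end{equation*}
where $T_i$ is the left subtree rooted at $q_i$. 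At $i=l$ this reads $2^l q=Pp+Qq$ with $P=\sum_{j\in p(T)}2^j$, $Q=\sum_{j\in q(T)}2^j$, and since $(p,q)=1$ forces $q\mid P$ while $Pp\le 2^l q<2pq$ forces $P<2q$, one gets $P=q$ and hence $Q=2^l-p$, which is exactly (7).

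The reason your recursion and your numerical check go wrong is the height convention. Heights are measured from the bottom of the whole tree, so stacking a new root on top of $T_{k}$ leaves every existing node's height \emph{unchanged}; the correct recursions are $P_{k+1}=P_k$ (single child), $P_{k+1}=P_k+2^{k}$ (attaching a leaf $p$, which sits at height $k$ just below the new root at height $k+1$), and $P_{k+1}=P_k+2^{k-t}P_t$ (attaching a copy of $T_t$, whose nodes are all shifted \emph{up} by $k-t$). Your recursions $P_i=2P_{i-1}$, $2P_{i-1}+1$, $2P_{i-1}+2^{\,i-1-j}P_j$ implicitly place each newly attached leaf at height $0$ and shift the old leaves up, which is the opposite convention; that is why you computed $p(T)=\{0,1,3,5\}$ and got $43$. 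With the paper's convention the $p$-labelled leaves in Figure \ref{sec5:fg1} sit at heights $0,1,2,2$ (note this is a multiset --- the identity in (7) must be read with multiplicities, which may be part of what misled you), giving $1+2+4+4=11=q$, and the $q$-labelled leaves sit at heights $0,1,2$, giving $7=2^6-57$. With the invariant above and these three recursions the induction closes in one line per case, exactly as in the paper; without identifying that invariant and the correct height bookkeeping, the proof of (7) is not complete.
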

\begin{proof}
(1)--(6) are immediate from the construction.

Let $l=\left\lceil\log_2\left(p\right)\right\rceil$ and assume that $A=\{q_i\}_{i=1}^l$ is sorted according to Definition \ref{sec4:def1}. For $i=1,\ldots,l$, let $T_i$ be the left subtree of $T$ with the root node labelled by $q_i$. We claim that 
\begin{equation}\label{sec5:eq7}
	2^iq_i=\left(\sum_{j\in p(T_i)}2^{j}\right)p+\left(\sum_{j\in q(T_i)}2^j\right)q
\end{equation}
holds true for all $i$. Let us prove \eqref{sec5:eq7} by induction on $i$.
For $i=1$, we have $2q_1=p+q$ which is exactly \eqref{sec5:eq7}. Now assume that \eqref{sec5:eq7} is true for $i=k\ge1$. Then if $q_{k+1}=\frac{q_{k}}{2}$, we have
\begin{align*}
	2^{k+1}q_{k+1}=2^{k}q_k&=\left(\sum_{j\in p(T_k)}2^{j}\right)p+\left(\sum_{j\in q(T_k)}2^j\right)q\\
	&=\left(\sum_{j\in p(T_{k+1})}2^{j}\right)p+\left(\sum_{j\in q(T_{k+1})}2^j\right)q;
\end{align*}
if $q_{k+1}=\frac{q_{k}+p}{2}$, we have
\begin{align*}
2^{k+1}q_{k+1}=2^{k}q_k+2^kp&=\left(\sum_{j\in p(T_k)}2^{j}+2^k\right)p+\left(\sum_{j\in q(T_k)}2^j\right)q\\
&=\left(\sum_{j\in p(T_{k+1})}2^{j}\right)p+\left(\sum_{j\in q(T_{k+1})}2^j\right)q;
\end{align*}
if $q_{k+1}=\frac{q_{k}+q}{2}$, we have
\begin{align*}
2^{k+1}q_{k+1}=2^{k}q_k+2^kq&=\left(\sum_{j\in p(T_k)}2^{j}\right)p+\left(\sum_{j\in q(T_k)}2^j+2^k\right)q\\
&=\left(\sum_{j\in p(T_{k+1})}2^{j}\right)p+\left(\sum_{j\in q(T_{k+1})}2^j\right)q;
\end{align*}
if $q_{k+1}=\frac{q_{k}+q_t}{2}$, we have
\begin{align*}
2^{k+1}q_{k+1}&=2^{k}q_k+2^kq_t\\
&=\left(\sum_{j\in p(T_k)}2^{j}+2^{k-t}\sum_{j\in p(T_t)}2^{j}\right)p+\left(\sum_{j\in q(T_k)}2^j+2^{k-t}\sum_{j\in q(T_t)}2^{j}\right)q\\
&=\left(\sum_{j\in p(T_{k+1})}2^{j}\right)p+\left(\sum_{j\in q(T_{k+1})}2^j\right)q.	
\end{align*}
Therefore, \eqref{sec5:eq7} is also true for $i=k+1$, and we complete the induction. Letting $i=l$ in \eqref{sec5:eq7} gives $2^{\left\lceil\log_2(p)\right\rceil}q=\left(\sum_{j\in p(T)}2^{j}\right)p+\left(\sum_{j\in q(T)}2^j\right)q$, from which we deduce $\sum_{j\in p(T)}2^{j}=q$ and $\sum_{j\in q(T)}2^j=2^{\left\lceil\log_2(p)\right\rceil}-p$.
\end{proof}

By virtue of Theorem \ref{sec4:thm3}, we are able to enumerate all successive minimum $(p,q)$-mediated sequences for given $p,q$ via traversing the related binary tree representations. We emphasize that Property (7) stated in Theorem \ref{sec4:thm3} is crucial to reduce the search space of valid binary tree representations. 

\section{Algorithms}\label{sec4}
In this section, we study algorithms for computing simple second-order cone representations of weighted geometric mean inequalities. Unlike the bivariate case treated in the previous section, computing optimal simple second-order cone representations for general weighted geometric mean inequalities seems a notoriously difficult problem. Therefore, we are mostly interested in efficient heuristic algorithms that can produce approximately optimal simple second-order cone representations. We will propose two types of fast heuristic algorithms in Section~\ref{sec5:fh}, and then extend them to certain ``traversal-style'' algorithms in Section~\ref{sec5:ta}. For completeness and comparison, a brute-force algorithm for computing optimal simple second-order cone representations is provided in Section~\ref{sec5:bf}. Finally, we evaluate all these algorithms via numerical experiments in Section~\ref{sec5:ne}.

\subsection{Fast heuristic algorithms}\label{sec5:fh}
Our heuristic algorithms rely on a simple result which is stated in the following lemma.
\begin{lemma}[\cite{kian2019minimal}, Lemma 1]\label{sec5:lm1}
Let $s_1,\ldots,s_m\in\N^*$ be a tuple of integers. Then for a pair $i,j\in\{1,\ldots,m\}$ and for any $\gamma\in\N$ with $0<\gamma\le\min\,\{s_i,s_j\}$, one has
\begin{equation*}
\begin{split}
   \prod_{k=1}^mx_k^{s_k}\ge x_{m+1}^{\hat{s}}\iff\exists y\ge0\text{ s.t. } x_i^{s_i-\gamma}x_j^{s_j-\gamma}y^{2\gamma}\prod_{\substack{k=1\\ k\ne i,j}}^mx_k^{s_k}\ge x_{m+1}^{\hat{s}},x_ix_j\ge y^2.
\end{split}
\end{equation*}
\end{lemma}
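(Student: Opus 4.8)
The plan is to prove the equivalence by establishing both implications, treating the auxiliary variable $y$ as a witness that can be constructed explicitly in the forward direction and eliminated in the backward direction. Throughout I work with nonnegative variables, so all the quantities $x_i^{s_i-\gamma}$, $y^{2\gamma}$, etc.\ are well defined (recalling $0<\gamma\le\min\{s_i,s_j\}$ guarantees $s_i-\gamma,s_j-\gamma\ge0$).

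\medskip
\noindent\textbf{($\Leftarrow$)} Suppose there exists $y\ge0$ with $x_ix_j\ge y^2$ and
\[
x_i^{s_i-\gamma}x_j^{s_j-\gamma}y^{2\gamma}\prod_{\substack{k=1\\ k\ne i,j}}^m x_k^{s_k}\ge x_{m+1}^{\hat s}.
\]
Since $x_ix_j\ge y^2\ge0$ and the exponent $\gamma$ is a positive integer, raising to the $\gamma$-th power gives $x_i^{\gamma}x_j^{\gamma}\ge y^{2\gamma}$. All factors on the left-hand side are nonnegative, so I may substitute this lower bound for $y^{2\gamma}$ to obtain
\[
x_1^{s_1}\cdots x_m^{s_m}=x_i^{s_i-\gamma}x_j^{s_j-\gamma}\bigl(x_i^{\gamma}x_j^{\gamma}\bigr)\prod_{\substack{k=1\\ k\ne i,j}}^m x_k^{s_k}\ge x_i^{s_i-\gamma}x_j^{s_j-\gamma}y^{2\gamma}\prod_{\substack{k=1\\ k\ne i,j}}^m x_k^{s_k}\ge x_{m+1}^{\hat s},
\]
which is the weighted geometric mean inequality.

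\medskip
\noindent\textbf{($\Rightarrow$)} Conversely, assume $x_1^{s_1}\cdots x_m^{s_m}\ge x_{m+1}^{\hat s}$. I claim $y\coloneqq\sqrt{x_ix_j}$ works. It is nonnegative and by definition satisfies $x_ix_j=y^2\ge y^2$. Moreover $y^{2\gamma}=(x_ix_j)^{\gamma}=x_i^{\gamma}x_j^{\gamma}$, so
\[
x_i^{s_i-\gamma}x_j^{s_j-\gamma}y^{2\gamma}\prod_{\substack{k=1\\ k\ne i,j}}^m x_k^{s_k}=x_i^{s_i}x_j^{s_j}\prod_{\substack{k=1\\ k\ne i,j}}^m x_k^{s_k}=x_1^{s_1}\cdots x_m^{s_m}\ge x_{m+1}^{\hat s},
\]
so the chosen $y$ certifies the right-hand side.

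\medskip
There is no real obstacle here; the only point requiring minor care is the direction from $x_ix_j\ge y^2$ to $x_i^{\gamma}x_j^{\gamma}\ge y^{2\gamma}$, which is valid precisely because $\gamma$ is a positive integer and all quantities are nonnegative (so the map $t\mapsto t^{\gamma}$ is monotone on $\R_+$), and the fact that multiplying the inequality $x_i^{\gamma}x_j^{\gamma}\ge y^{2\gamma}$ by the nonnegative quantity $x_i^{s_i-\gamma}x_j^{s_j-\gamma}\prod_{k\ne i,j}x_k^{s_k}$ preserves its direction. Together the two implications give the claimed equivalence.
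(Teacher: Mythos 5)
Your proof is correct; the paper states this lemma without proof (treating it as straightforward), and your argument—substituting $x_i^{\gamma}x_j^{\gamma}\ge y^{2\gamma}$ in one direction and taking $y=\sqrt{x_ix_j}$ as the witness in the other—is exactly the intended routine verification.
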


Suppose that we are given the inequality $\prod_{k=1}^mx_k^{s_k}\ge x_{m+1}^{\hat{s}}$ and let $l=\left\lceil\log_2\left(\hat{s}\right)\right\rceil$. Let us multiply $\prod_{k=1}^mx_k^{s_k}\ge x_{m+1}^{\hat{s}}$ by $x_{m+1}^{2^l-\hat{s}}$ to obtain $\prod_{k=1}^mx_k^{s_k}x_{m+1}^{2^l-\hat{s}}\ge x_{m+1}^{2^l}$ so that the exponent of $x_{m+1}$ is a power of $2$.
Now we claim that if the exponent of $x_{m+1}$ is a power of $2$, then we are able to obtain a simple second-order cone representation for the weighted geometric mean inequality by iteratively applying Lemma \ref{sec5:lm1} with appropriate $i,j$ and $\gamma$ as detailed in Algorithm~\ref{sec5:alg1}. Note that Algorithm~\ref{sec5:alg1} terminates when the tuple $s_1,\ldots,s_m$ reduces to two nonzero numbers, each being $2^{l-1}$. 

\begin{algorithm}
\renewcommand{\algorithmicrequire}{\textbf{Input:}}
\renewcommand{\algorithmicensure}{\textbf{Output:}}
\caption{}\label{sec5:alg1}
\begin{algorithmic}[1]
\REQUIRE
A tuple of positive integers $s_1,\ldots,s_m$ with $(s_1,\ldots,s_m)=1$
\ENSURE
A configuration $\{(i_k,j_k,t_k)\}_k$ that determines a simple second-order cone representation for $x_1^{s_1}\cdots x_m^{s_m}\ge x_{m+1}^{\hat{s}}$
\STATE $l\leftarrow\left\lceil\log_2\left(\hat{s}\right)\right\rceil$, $t\leftarrow m+1$, $m\leftarrow m+1$, $s_{m}\leftarrow2^l-\hat{s}$;
\STATE $\mathcal{S}\leftarrow\emptyset$;
\REPEAT
\STATE Select a pair $i,j\in\{1,\ldots,m\}$ and an integer $\gamma$ satisfying  $0<\gamma\le\min\,\{s_i,s_j\}$;
\STATE $m\leftarrow m+1$, $s_i\leftarrow s_i-\gamma$, $s_j\leftarrow s_j-\gamma$, $s_m\leftarrow2\gamma$; 
\STATE $\mathcal{S}\leftarrow\mathcal{S}\cup\{(i,j,m)\}$;
\UNTIL{$\gamma=2^{l-1}$}
\RETURN $\mathcal{S}\cup\{(i,j,t)\}$;
\end{algorithmic}
\end{algorithm}

In the following, we propose two strategies to guide us in selecting such a pair $i,j$ and the factor $\gamma$ at Step 4 of Algorithm~\ref{sec5:alg1}. For an integer $r$, let $\Omega(r)$ be the set of exponents of $2$ involved in the binary representation of $r$, and let $\Delta(r)\coloneqq\min\,\Omega(r)$ be the minimal exponent of $2$ involved in the binary representation of $r$. For example, with $r=7$, one has $\Omega(r)=\{0,1,2\}$ and $\Delta(r)=0$.

\begin{theorem}[cf. \cite{kian2019minimal}, Proposition 1]\label{sec5:thm1}
If we select the pair $i,j$ satisfying $\Omega(s_i)\cap\Omega(s_j)\neq\emptyset$ and let $\gamma=\sum_{k\in\Omega(s_i)\cap\Omega(s_j)}2^k$ at Step 4,
then Algorithm~\ref{sec5:alg1} terminates.
\end{theorem}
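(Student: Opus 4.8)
The plan is to track the quantity $\sum_{k=1}^m s_k$ together with a finer potential that measures how ``spread out'' the binary expansions of the $s_k$ are, and show that the prescribed rule strictly decreases a well-chosen potential at each iteration, so that the loop cannot run forever. The natural first observation is that the selection rule at Step~4 is always applicable as long as there are at least two nonzero entries that are not both equal to $2^{l-1}$: indeed, by equation~\eqref{sec4:eq1}-style bookkeeping (or simply because the exponent of $x_{m+1}$ is kept equal to $2^l$ throughout, so $\sum_{k}s_k = 2^l$ at all times), if no two entries shared a common bit then the binary expansions of the $s_k$ would be supported on pairwise disjoint sets of exponents, forcing each power of $2$ appearing in $2^l=\sum_k s_k$ to come from a unique $s_k$; since $2^l$ itself is a single power of $2$, this is only possible when exactly one entry equals $2^l$ and the rest are zero, or — after reductions — when the nonzero entries are powers of two that are pairwise distinct and sum to $2^l$. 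I would verify that the terminal configuration ``two entries equal to $2^{l-1}$'' is reached before (or exactly when) this degenerate all-distinct-powers situation occurs, using the explicit form of $\gamma$.

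The key computation is the effect of one iteration on the pair $(s_i,s_j)$. With $\gamma = \sum_{k\in\Omega(s_i)\cap\Omega(s_j)}2^k$, we have $s_i - \gamma$ and $s_j - \gamma$ obtained from $s_i,s_j$ by deleting exactly the bits in the common set $\Omega(s_i)\cap\Omega(s_j)$; hence $\Omega(s_i-\gamma) = \Omega(s_i)\setminus\Omega(s_j)$ and likewise for $j$, so the two new entries have \emph{disjoint} binary supports, and moreover $s_i - \gamma + s_j - \gamma$ has a binary expansion with strictly fewer one-bits in total than $s_i + s_j$ had. The newly created entry $s_m = 2\gamma$ is a left-shift of $\gamma$. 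The right potential to use is therefore something like $\Phi = \sum_k (\text{number of one-bits of } s_k) + (\text{number of nonzero entries})$, or a lexicographic combination of the total bit-count and the multiset of entries; I would check that replacing $\{s_i,s_j\}$ by $\{s_i-\gamma,\ s_j-\gamma,\ 2\gamma\}$ strictly decreases this potential (the subtraction of the shared bits removes at least $2|\Omega(s_i)\cap\Omega(s_j)|$ one-bits while $2\gamma$ reintroduces only $|\Omega(s_i)\cap\Omega(s_j)|$ of them, and the carry-free nature of the construction means no extra bits are created), while zero entries are simply dropped. Since $\Phi$ is a non-negative integer that strictly decreases, the loop halts.

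The main obstacle I anticipate is \textbf{not} the monotonicity of the potential, which is essentially an exercise in binary arithmetic, but rather pinning down precisely \emph{which} terminal state the algorithm lands in and arguing that the stopping test ``$\gamma = 2^{l-1}$'' is actually triggered — as opposed to the loop getting stuck in the degenerate state where all nonzero entries are distinct powers of two summing to $2^l$ but no single reduction step produces $\gamma = 2^{l-1}$. Resolving this requires showing that the potential-minimizing states reachable by the rule are exactly those with two equal entries $2^{l-1}$ (after gcd normalization the ``distinct powers'' state is inconsistent with $(s_1,\dots,s_m)=1$ unless it already is $\{2^{l-1},2^{l-1}\}$ scaled, contradiction for $l\ge 1$), or alternatively tightening the selection rule's description so that when only the degenerate state remains, the unique available pair has $\Omega(s_i)\cap\Omega(s_j)$ equal to a singleton forcing $\gamma$ up to $2^{l-1}$ in the final step. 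I would handle this by a short case analysis on the nonzero entries at termination, invoking $\sum_k s_k = 2^l$ and primitivity, rather than by a general argument.
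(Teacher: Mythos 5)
Your core mechanism is the same as the paper's. The paper's proof uses exactly the potential hiding in the first half of your ``key computation'': $h=\sum_k|\Omega(s_k)|$, the total number of one-bits. One step deletes the $c\coloneqq|\Omega(s_i)\cap\Omega(s_j)|$ shared bits from each of $s_i,s_j$ and reintroduces only $c$ bits via $2\gamma$, so $h$ drops by $c\ge1$; and a valid pair always exists because $\sum_k s_k=2^l$ forces at least two entries to share their least significant bit (reduce mod $2^{d+1}$ with $d=\min_k\Delta(s_k)$). Two points in your write-up need repair, though.

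First, the potential you actually name, $\Phi=\sum_k|\Omega(s_k)|+\#\{k:s_k\ne0\}$, is \emph{not} strictly decreasing: in the generic step both $s_i-\gamma$ and $s_j-\gamma$ are nonzero, so the number of nonzero entries increases by one while the bit count drops by $c$, giving $\Delta\Phi=1-c=0$ whenever $c=1$. Drop the second term; your own bit-count computation already suffices. Second, your fallback for ``the stopping test actually fires'' via primitivity does not work, because $(s_1,\ldots,s_m)=1$ is not preserved by the iteration (e.g.\ $(3,1)$ with $\gamma=1$ becomes $(2,0,2)$). Fortunately the degenerate state you worry about cannot occur at all: two or more \emph{distinct} powers of two never sum to a power of two, so disjoint binary supports together with $\sum_k s_k=2^l$ force a single nonzero entry $2^l$, and such an entry can only be created as $2\gamma$ with $\gamma=2^{l-1}$, which is precisely the stopping test. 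Concretely: while at least two entries are nonzero the selection rule applies; $h$ cannot jump from $3$ past $2$ (with $h=3$ the selected pair has $\min(|\Omega(s_i)|,|\Omega(s_j)|)\le1$, so $c=1$); and once $h=2$ the two nonzero entries are powers of two summing to $2^l$, hence both equal $2^{l-1}$, so $\gamma=2^{l-1}$ and the loop halts. With these two corrections your argument closes and coincides with the paper's.
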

\begin{proof}
As $s_1+\cdots+s_m=2^l$ holds true at Step 4, we can find a pair $i,j$ such that $\Delta(s_i)=\Delta(s_j)$ and so $\Omega(s_i)\cap\Omega(s_j)\neq\emptyset$.
Let us consider the quantity $h\coloneqq\sum_{i=1}^m|\Omega(s_i)|$. After one iteration, $s_i,s_j$ are replaced by $s_i-\gamma,s_j-\gamma$, and $2\gamma$ is added to the tuple. We then see that $h$ decreases by $|\Omega(s_i)\cap\Omega(s_j)|\ge1$ at each iteration. It follows that the condition $\gamma=2^{l-1}$ is satisfied in at most $\sum_{i=1}^{m}|\Omega(s_i)|-2$ iterations.
Therefore, Algorithm~\ref{sec5:alg1} terminates.
\end{proof}

\begin{corollary}
Let $s_1,\ldots,s_m\in\N^*$ be a tuple of integers with $(s_1,\ldots,s_m)=1$. Then
\begin{equation}
	L(s_1,\ldots,s_m)\le\sum_{i=1}^{m}|\Omega(s_i)|-1.
\end{equation}
\end{corollary}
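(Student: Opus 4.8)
The plan is to use the bound from Theorem~\ref{sec5:thm1} together with a careful count of the number of iterations Algorithm~\ref{sec5:alg1} performs under the selection rule proposed there. Recall that the size of the second-order cone representation produced is exactly $|\mathcal{S}\cup\{(i,j,t)\}|$, which equals one plus the number of times Step~4--6 are executed. So the corollary will follow once I bound that iteration count by $\sum_{i=1}^m|\Omega(s_i)|-2$; here the $s_i$ are the original inputs, before Step~1 augments the tuple with $s_m\leftarrow 2^l-\hat{s}$.

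First I would recall the key monotone quantity introduced in the proof of Theorem~\ref{sec5:thm1}: with $h\coloneqq\sum_{i}|\Omega(s_i)|$ over the current tuple, each iteration decreases $h$ by $|\Omega(s_i)\cap\Omega(s_j)|\ge 1$, and the loop halts once $\gamma=2^{l-1}$, i.e.\ once the tuple has collapsed to two copies of $2^{l-1}$, at which point $h=2$. The subtlety is the initialization: Step~1 adds the extra entry $s_{m+1}\coloneqq 2^l-\hat{s}$ where $l=\lceil\log_2(\hat{s})\rceil$. If $\hat{s}$ is itself a power of $2$ this entry is $0$ and contributes nothing (and indeed one should check the algorithm is never invoked with a zero entry, or simply treat $\Omega(0)=\emptyset$). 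Otherwise $0<2^l-\hat{s}<2^{l-1}$, so $\Omega(2^l-\hat{s})\subseteq\{0,1,\dots,l-2\}$ and hence $|\Omega(2^l-\hat{s})|\le l-1$. Thus after Step~1 the quantity $h$ starts at $h_0\le \sum_{i=1}^m|\Omega(s_i)| + (l-1)$, and the number of iterations is at most $h_0-2$. This alone gives only $L(s_1,\dots,s_m)\le\sum_{i=1}^m|\Omega(s_i)|+l-2$, which is weaker than claimed.

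To get the sharper bound I would instead argue that the final ``collapse'' step where $\gamma=2^{l-1}$ is itself one of the counted iterations, and that the entry $2^l-\hat{s}$ must be fully consumed before termination, so it cannot be ``for free''. More carefully: at termination the tuple is $(2^{l-1},2^{l-1})$ with total $h=2$, and the monotone decrease by $\ge 1$ per step gives (number of steps)$\le h_0-2$. The cleanest route is to observe that we do not in fact need the entry $2^l-\hat{s}$ counted at all: one can run the reduction directly on $x_1^{s_1}\cdots x_m^{s_m}x_{m+1}^{2^l-\hat{s}}\ge x_{m+1}^{2^l}$ and note that at every stage the relevant quantity is $h$ computed over the \emph{working} exponents $s_1,\dots,s_m$ only, because the extra exponent of the target variable $x_{m+1}$ plays the role of a ``slack'' and, under the rule $\Omega(s_i)\cap\Omega(s_j)\ne\emptyset$ with $\gamma$ the full overlap, one can always choose the pair among $s_1,\dots,s_m$ until they are exhausted—so $h$ over $s_1,\dots,s_m$ drops from $\sum_{i=1}^m|\Omega(s_i)|$ to a terminal value, and the iteration count is at most $\sum_{i=1}^m|\Omega(s_i)|-1$. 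Putting $L(s_1,\dots,s_m)\le$ (iteration count) then yields the stated inequality.

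The main obstacle I anticipate is pinning down exactly how the slack variable $x_{m+1}^{2^l-\hat{s}}$ interacts with the counting: whether it should be folded into $h$ (costing an extra $l-1$) or whether the selection rule can be refined to avoid ever ``spending'' an iteration on it, and correspondingly whether the terminal configuration is two copies of $2^{l-1}$ among the working variables or involves the target variable. I would resolve this by tracking the invariant $s_1+\cdots+s_m+(\text{residual exponent of }x_{m+1})=2^{l}$ throughout, showing the loop terminates precisely when the working tuple is empty or reduced to a single $2^{l-1}$ paired with the target's residual $2^{l-1}$, and verifying in each of the (at most three) terminal cases that the total number of executed iterations is bounded by $\sum_{i=1}^m|\Omega(s_i)|-1$. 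Once that bookkeeping is settled the rest is immediate from Theorem~\ref{sec5:thm1} and the one-to-one correspondence between configurations and second-order cone representations.
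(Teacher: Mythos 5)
Your first-pass accounting is the honest version of what the paper's own proof does: the paper simply cites the monotone quantity $h=\sum_i|\Omega(s_i)|$ from the proof of Theorem \ref{sec5:thm1} and reads off the iteration count of Algorithm \ref{sec5:alg1}. The gap in your proposal is the second half, where you try to argue that the slack entry $2^l-\hat{s}$ appended in Step 1 can be kept out of the count. That cannot work. Under the rule $\Omega(s_i)\cap\Omega(s_j)\ne\emptyset$ it is in general impossible to keep choosing pairs among the original exponents: already for $(s_1,s_2)=(1,2)$ one has $\Omega(s_1)\cap\Omega(s_2)=\emptyset$, so the very first iteration must involve the slack entry $2^2-3=1$. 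More decisively, the sharper bound you are reaching for (which is the bound as stated in the corollary) is simply false: for $(s_1,s_2)=(1,2)$ it would give $L(1,2)\le 1$, whereas Theorem \ref{sec4:thm2} gives $L(1,2)=\lceil\log_2 3\rceil=2$; for $(s_1,s_2,s_3)=(1,1,1)$ it would give $L(1,1,1)\le 2$, whereas Table \ref{tb2} records $L(1,1,1)=3$. No refinement of the bookkeeping can close this gap, so the "resolution" you sketch at the end (always pairing within the working exponents, terminal-case analysis) is a dead end.

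What your correct first computation actually establishes is
\begin{equation*}
L(s_1,\ldots,s_m)\le\sum_{i=1}^m|\Omega(s_i)|+\bigl|\Omega\bigl(2^{\lceil\log_2\hat{s}\rceil}-\hat{s}\bigr)\bigr|-1,
\end{equation*}
the extra summand being the contribution of the exponent attached to $x_{m+1}$ in Step 1 of Algorithm \ref{sec5:alg1}; this coincides with the stated bound exactly when $\hat{s}$ is a power of $2$, so that the slack entry vanishes. In other words, the statement needs either this additional summand or the hypothesis $\hat{s}=2^l$. You should stop at your first estimate (stated in this corrected form, or with the cruder bound $|\Omega(2^l-\hat{s})|\le l-1$ that you already derived) rather than pursue the inequality as written, which fails.
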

\begin{proof}
The proof of Theorem \ref{sec5:thm1} immediately implies that Algorithm~\ref{sec5:alg1} yields a simple second-order cone representation for $x_1^{s_1}\cdots x_m^{s_m}\ge x_{m+1}^{\hat{s}}$ of size at most $\sum_{i=1}^{m}|\Omega(s_i)|-1$, from which we get the desired inequality.
\end{proof}

\begin{theorem}\label{sec5:thm2}
If we select the pair $i,j$ such that $\Delta(s_i)=\Delta(s_j)=\min\,\{\Delta(s_k)\}_{k=1}^m$, and let $\gamma=\min\,\{s_i,s_j\}$ at Step 4, then Algorithm~\ref{sec5:alg1} terminates.
\end{theorem}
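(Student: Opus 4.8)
The plan is to mimic the termination proof of Theorem \ref{sec5:thm1}, but using a cleverly chosen potential function adapted to the new selection rule. Under this rule we always pick $i,j$ with $\Delta(s_i)=\Delta(s_j)=d$ where $d=\min_k\Delta(s_k)$, and set $\gamma=\min\{s_i,s_j\}$. First I would verify that such a pair exists at every iteration: since $s_1+\cdots+s_m=2^l$ at Step 4 and each $s_k\ge 1$, writing $d=\min_k\Delta(s_k)$, the sum of the $s_k$ with $\Delta(s_k)=d$ must be divisible by $2^{d+1}$ (otherwise the total sum would have a nonzero $2^d$-bit, contradicting that it is $2^l$ with $l>d$); in particular there are at least two such indices, so the pair $i,j$ can always be found. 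I would also note that the invariant $\sum_k s_k = 2^l$ is preserved by each iteration, since $(s_i-\gamma)+(s_j-\gamma)+2\gamma = s_i+s_j$.

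Next I would analyze what one iteration does. With $\gamma=\min\{s_i,s_j\}$, say $\gamma=s_i\le s_j$, the entry $s_i$ becomes $0$ (and is effectively removed), $s_j$ becomes $s_j-s_i$, and a new entry $2\gamma=2s_i$ is appended. The key point is that $\Delta(s_i)=\Delta(s_j)=d$, so $s_i$ and $s_j$ agree in their lowest bit; hence $s_j-s_i$ is divisible by $2^{d+1}$, i.e. $\Delta(s_j-s_i)\ge d+1$ (or the entry vanishes too, which only helps), while the new entry $2s_i$ has $\Delta(2s_i)=d+1$. So after the iteration \emph{no} remaining entry has $2$-adic valuation equal to $d$; the minimal valuation among the positive entries strictly increases, unless all but one entry has been eliminated, in which case a single entry equal to $2^l$ remains and, tracing back one step, $\gamma=2^{l-1}$ and the loop stops. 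Since the minimal $2$-adic valuation $d$ is bounded above by $l-1$ as long as at least two positive entries remain (their sum being $2^l$), this quantity can increase at most $l$ times before forcing termination.

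Concretely, I would take the potential $\Phi=\big(\text{number of positive entries}\big)+l\cdot\big(\,l-\min_k\Delta(s_k)\,\big)$ or, more simply, argue directly: at each iteration either the number of positive entries strictly decreases while $\min_k\Delta(s_k)$ is nondecreasing, or $\min_k\Delta(s_k)$ strictly increases; since both quantities are bounded, only finitely many iterations are possible, and because the total sum stays $2^l$, the process must end with two entries each equal to $2^{l-1}$, at which point $\gamma=2^{l-1}$ and Algorithm~\ref{sec5:alg1} terminates. A cleaner single monovariant is $h'\coloneqq\sum_{i=1}^m \big(l-\Delta(s_i)\big)$ over the positive entries: eliminating $s_i$ removes the term $l-\Delta(s_i)=l-d$, the new term for $2s_i$ contributes $l-(d+1)$, and replacing $s_j$ by $s_j-s_i$ changes its contribution from $l-d$ to $l-\Delta(s_j-s_i)\le l-(d+1)$; so $h'$ drops by at least $(l-d)-\big[(l-d-1)+(l-d-1)-(l-d)\big]=2>0$ — I would double-check this arithmetic but the net effect is a strict decrease of $h'$ by a positive integer each round, and $h'\ge 0$ throughout, giving termination.

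The main obstacle I anticipate is getting the bookkeeping exactly right in the case analysis of one iteration — in particular handling the subcase $s_i=s_j$ (where $s_j-s_i=0$ and a second entry is eliminated) and the subcase where $2s_i$ might equal some other entry or where $s_j-s_i$ happens to share the new minimal valuation $d+1$, to make sure the chosen monovariant genuinely strictly decreases (or that the pair $i,j$ can still be chosen). Once it is established that $\min_k\Delta(s_k)$ never decreases and strictly increases whenever the number of positive entries fails to decrease, termination is immediate from the bound $\min_k\Delta(s_k)\le l-1$ together with the invariant $\sum_k s_k=2^l$, exactly as in the proof of Theorem \ref{sec5:thm1}.
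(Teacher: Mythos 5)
Your final argument --- the monovariant $h'=\sum_{i}(l-\Delta(s_i))$ (with zero entries contributing $0$), which drops by at least $2$ per iteration because $\Delta(s_j-s_i)\ge d+1$ and the new entry $2s_i$ has valuation $d+1$ --- is exactly the paper's proof, and your arithmetic checks out. One caveat: your intermediate claim that after an iteration no remaining entry has valuation $d$ (equivalently, that either the number of positive entries drops or $\min_k\Delta(s_k)$ strictly increases) is false when more than two entries attain the minimal valuation; for instance, from $(1,3,1,3)$ with $2^l=8$, pairing the two entries $1$ and $3$ yields $(0,2,1,3,2)$, which still has four positive entries and minimal valuation $0$. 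Since you discard that route in favor of $h'$, the proof stands; your existence argument for the pair (the number of entries of minimal valuation $d$ must be even because the total is $2^l$) is also correct and is in fact more detailed than what the paper supplies.
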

\begin{proof}
As $s_1+\cdots+s_m=2^l$ holds true at Step 4, we can find a pair $i,j$ such that $\Delta(s_i)=\Delta(s_j)=\min\,\{\Delta(s_k)\}_{k=1}^m$ and so $\Delta(s_i-s_j)-\Delta(s_j)\ge1$.
Let us consider the quantity $h\coloneqq\sum_{i=1}^m(l-\Delta(s_i))$ (set $\Delta(0)\coloneqq l$). After one iteration, $s_i,s_j$ are replaced by $\max\,\{s_i,s_j\}-\min\,\{s_i,s_j\},0$, and $2\min\,\{s_i,s_j\}$ is added to the tuple. We then see that $h$ decreases by $1+\Delta(s_i-s_j)-\Delta(s_j)\ge2$ at each iteration. It follows that the condition $\gamma=2^{l-1}$ is satisfied in at most $\frac{1}{2}\sum_{i=1}^m(l-\Delta(s_i))-1$ iterations.
Therefore, Algorithm~\ref{sec5:alg1} terminates.
\end{proof}

\begin{corollary}
Let $s_1,\ldots,s_m\in\N^*$ be a tuple of integers with $(s_1,\ldots,s_m)=1$ and let $l=\left\lceil\log_2\left(\hat{s}\right)\right\rceil$. Then
\begin{equation}
	L(s_1,\ldots,s_m)\le\frac{1}{2}\sum_{i=1}^m\left(l-\Delta(s_i)\right).
\end{equation}
\end{corollary}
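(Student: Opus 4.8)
The plan is to derive this bound as an immediate consequence of the proof of Theorem~\ref{sec5:thm2}, exactly in the way the previous corollary was read off from the proof of Theorem~\ref{sec5:thm1}. First I would recall the setup: run Algorithm~\ref{sec5:alg1} with the selection rule of Theorem~\ref{sec5:thm2}, i.e.\ at each step pick indices $i,j$ with $\Delta(s_i)=\Delta(s_j)=\min_k\Delta(s_k)$ and take $\gamma=\min\{s_i,s_j\}$. Theorem~\ref{sec5:thm2} guarantees termination, and by Lemma~\ref{sec5:lm1} the configuration returned by the algorithm determines a valid second-order cone representation of $x_1^{s_1}\cdots x_m^{s_m}\ge x_{m+1}^{\hat s}$. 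Hence $L(s_1,\ldots,s_m)$ is bounded above by the size of this representation, which coincides with the number of executions of the main loop of Algorithm~\ref{sec5:alg1} (each execution emits one quadratic inequality, the last being the target inequality $x_ix_j\ge x_{m+1}^2$ produced at the return step).

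Second, I would bound that loop count by the potential-function estimate already established inside the proof of Theorem~\ref{sec5:thm2}. Setting $\Delta(0)\coloneqq l$ and $h\coloneqq\sum_k (l-\Delta(s_k))$ over the current tuple, the value of $h$ after Step~1 of the algorithm is some $h_0$; each iteration then decreases $h$ by at least $2$, because the two selected entries share a common $2$-adic valuation $d$, so both their difference and the newly adjoined entry $2\min\{s_i,s_j\}$ have valuation strictly larger than $d$. Since $h$ stays nonnegative (indeed it reaches its terminal value only at the end), the number of iterations, and therefore $L(s_1,\ldots,s_m)$, is at most $\tfrac12 h_0$.

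The remaining work is purely bookkeeping, and that is where the main care is needed. One has to (a) account for the extra entry $2^l-\hat s$ that Step~1 of Algorithm~\ref{sec5:alg1} adjoins to the tuple before the loop begins --- this entry is absent exactly when $\hat s$ is a power of $2$, and in general its contribution $l-\Delta(2^l-\hat s)$ to $h_0$ must be reconciled with the constant asserted in the statement --- and (b) verify that ``number of loop iterations'' really equals ``number of quadratic inequalities in the returned representation,'' which holds because the final iteration reuses $x_{m+1}$ in place of a fresh auxiliary variable rather than creating one. I expect step~(a), namely pinning down how the adjoined exponent $2^l-\hat s$ interacts with the bound, to be essentially the only nontrivial point; everything else is a direct transcription of the proof of Theorem~\ref{sec5:thm2}.
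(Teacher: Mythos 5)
Your approach is exactly the paper's: the paper's proof of this corollary is literally the one-line observation that the proof of Theorem~\ref{sec5:thm2} bounds the size of the representation produced by Algorithm~\ref{sec5:alg1} under the power-two selection rule, and your potential-function transcription of that argument is faithful. Your point~(b) is also fine (the last quadratic inequality is the one targeting $x_{m+1}$, so the representation size equals the loop count).

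The issue you isolate in point~(a), however, is not a formality to be ``reconciled'' --- it is a genuine gap, and in fact it cannot be closed, because the stated bound is too strong. The potential after Step~1 is $h_0=\sum_{i=1}^m\bigl(l-\Delta(s_i)\bigr)+\bigl(l-\Delta(2^l-\hat{s})\bigr)$ (with $\Delta(0)\coloneqq l$), and the extra term is strictly positive whenever $\hat{s}$ is not a power of two; the argument therefore only yields $L(s_1,\ldots,s_m)\le\frac12 h_0$, not the claimed $\frac12\sum_{i=1}^m(l-\Delta(s_i))$. Concretely, for $(s_1,s_2)=(1,2)$ one has $\hat{s}=3$, $l=2$, and the claimed bound is $\frac12\bigl((2-0)+(2-1)\bigr)=\frac32$, whereas $L(1,2)=\lceil\log_2 3\rceil=2$ by Theorem~\ref{sec4:thm2}; similarly $(1,4)$ gives a claimed bound of $2$ against $L(1,4)=3$. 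So the corollary as printed is false, and the paper's own one-line proof silently drops exactly the term you flagged. The honest conclusion of your (and the paper's) argument is the corrected bound $L(s_1,\ldots,s_m)\le\frac12\bigl[\sum_{i=1}^m(l-\Delta(s_i))+(l-\Delta(2^l-\hat{s}))\bigr]$, which does hold and reduces to the stated one precisely when $\hat{s}=2^l$.
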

\begin{proof}
The proof of Theorem \ref{sec5:thm2} immediately implies that Algorithm~\ref{sec5:alg1} yields a simple second-order cone representation for $x_1^{s_1}\cdots x_m^{s_m}\ge x_{m+1}^{\hat{s}}$ of size at most $\frac{1}{2}\sum_{i=1}^m(l-\Delta(s_i))$. Thus we get the desired inequality.
\end{proof}

Theorems \ref{sec5:thm1} and \ref{sec5:thm2} allow us to find an approximately optimal simple second-order cone representation for a weighted geometric mean inequality by implementing greedy strategies at each iteration of the loop in Algorithm~\ref{sec5:alg1}: 
\begin{enumerate}[(1)]
	\item selecting the pair $i,j$ to maximize $|\Omega(s_i)\cap\Omega(s_j)|$ and letting $\gamma=\sum_{k\in\Omega(s_i)\cap\Omega(s_j)}2^k$ (hereafter referred to as the greedy-common-one strategy), or alternatively,
	\item selecting the pair $i,j$ to maximize $\Delta(s_i-s_j)$ with $\Delta(s_i)=\Delta(s_j)=\min\,\{s_k\}_{k=1}^m$ and letting $\gamma=\min\,\{s_i,s_j\}$ (hereafter referred to as the greedy-power-two strategy). 
\end{enumerate}

\begin{remark}
Algorithm~\ref{sec5:alg1} that employs the greedy-common-one strategy has been used to compute simple second-order cone representations
for weighted geometric mean inequalities $x_1^{s_1}\cdots x_m^{s_m}\ge x_{m+1}^{\hat{s}}$ with $\hat{s}=2^l$ in \cite{kian2019minimal}.
\end{remark}

The following lemmas whose proofs are straightforward allow us to further enhance the performance of Algorithm~\ref{sec5:alg1}.
\begin{lemma}\label{sec5:lm2}
Let $s_1,\ldots,s_m\in\N^*$ be a tuple of integers. Assume $s_m=\max\,\{s_i\}_{i=1}^m$ and $l=\left\lceil\log_2\left(\hat{s}\right)\right\rceil$. Then,
\begin{align*}
&\prod_{k=1}^mx_k^{s_k}\ge x_{m+1}^{\hat{s}}\iff\exists y\ge0\text{ s.t. }\\
&\begin{cases}
   \prod_{k=1}^{m-1}x_k^{s_k}\ge y^{\frac{\hat{s}}{2}}, x_my\ge x_{m+1}^2, &\text{ if }s_m=\frac{\hat{s}}{2};\\
   \prod_{k=1}^{m-1}x_k^{s_k}x_{m+1}^{2s_m-\hat{s}}\ge y^{s_m}, x_my\ge x_{m+1}^2, &\text{ if }\frac{\hat{s}}{2}<s_m\le 2^{l-1};\\
   \prod_{k=1}^{m-1}x_k^{s_k}x_m^{s_m-2^{l-1}}x_{m+1}^{2^l-\hat{s}}\ge y^{2^{l-1}}, x_my\ge x_{m+1}^2, &\text{ if }s_m>2^{l-1},\hat{s}<2^l;\\
   \prod_{k=1}^{m-1}x_k^{s_k}x_m^{s_m-2^{l-1}}\ge y^{2^{l-1}}, x_my\ge x_{m+1}^2, &\text{ if }s_m>2^{l-1},\hat{s}=2^l.
\end{cases}
\end{align*}
\end{lemma}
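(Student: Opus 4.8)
The plan is to verify the claimed equivalence in Lemma~\ref{sec5:lm2} case by case, treating each of the four branches by the same elementary two-step argument. The underlying idea is that all four identities are instances of the standard substitution
\[
x_my\ge x_{m+1}^2\ \text{with}\ y\ge0\quad\Longleftrightarrow\quad x_{m+1}=x_m^{1/2}y^{1/2}\ \text{(at optimum)},
\]
combined with splitting off the factor $x_m^{s_m}$ from the product by raising the $y$-inequality to an appropriate power. So first I would fix the case, say $\frac{\hat{s}}{2}<s_m\le 2^{l-1}$, and argue the forward direction: given $\prod_k x_k^{s_k}\ge x_{m+1}^{\hat s}$, set $y\coloneqq x_{m+1}^2/x_m$ (if $x_m>0$; the degenerate $x_m=0$ case is handled separately since then nonnegativity forces $x_{m+1}=0$ and both sides trivially hold). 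Then $x_my=x_{m+1}^2$, and it remains to check the exponent bookkeeping: substituting $y=x_{m+1}^2/x_m$ into $\prod_{k=1}^{m-1}x_k^{s_k}x_{m+1}^{2s_m-\hat s}\ge y^{s_m}$ reduces it, after clearing $x_m^{s_m}$, exactly to the original inequality. The point of the hypothesis $s_m\ge\frac{\hat s}{2}$ is to ensure $2s_m-\hat s\ge0$ so that $x_{m+1}^{2s_m-\hat s}$ is a genuine (monomial) term and not a division, and the point of $s_m\le 2^{l-1}$ together with $s_m=\max_i s_i$ is that the new tuple of exponents on the left, namely $(s_1,\ldots,s_{m-1},2s_m-\hat s)$ with target exponent $s_m$, again sums correctly and keeps us in the regime where the recursion makes progress.

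Next I would do the reverse direction, which is the easy half: given $y\ge0$ with $x_my\ge x_{m+1}^2$ and $\prod_{k=1}^{m-1}x_k^{s_k}x_{m+1}^{2s_m-\hat s}\ge y^{s_m}$, multiply both sides of the second inequality by $x_m^{s_m}$, use $x_m^{s_m}y^{s_m}\ge x_{m+1}^{2s_m}$ (which follows by raising $x_my\ge x_{m+1}^2\ge0$ to the $s_m$-th power, legitimate since all quantities are nonnegative), and cancel $x_{m+1}^{2s_m-\hat s}$ appropriately — for this cancellation to be valid one needs $x_{m+1}>0$, and again the $x_{m+1}=0$ boundary case is checked directly. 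This yields $\prod_{k=1}^m x_k^{s_k}\ge x_{m+1}^{\hat s}$, closing the loop.

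Then I would repeat the same pattern for the other three branches, each time choosing $y=x_{m+1}^2/x_m$ in the forward direction and noting that the stated form of the reduced inequality is precisely what the exponent arithmetic produces: in the case $s_m=\frac{\hat s}{2}$ the power to which one raises the $y$-inequality is $\frac{\hat s}{2}$ and no $x_{m+1}$ term survives on the left; in the case $s_m>2^{l-1}$ with $\hat s<2^l$ one first multiplies the original inequality by $x_{m+1}^{2^l-\hat s}$ (making the target exponent $2^l$) and splits off $x_m^{2^{l-1}}$, leaving residual exponents $s_m-2^{l-1}$ on $x_m$ and $2^l-\hat s$ on $x_{m+1}$; and the last case $\hat s=2^l$ is the same with the vacuous factor $x_{m+1}^{0}$ dropped. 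In each branch one verifies the two algebraic identities $s_m+(\text{residual }x_m\text{ exp})=\ldots$ and that the exponents on the left sum to the new target, which is the routine computation referred to by ``straightforward.''

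The only mildly delicate point — and the one I would single out as the main obstacle, though it is still routine — is the careful treatment of the boundary cases $x_m=0$ and $x_{m+1}=0$, since the substitution $y=x_{m+1}^2/x_m$ and the cancellations of $x_{m+1}^{2s_m-\hat s}$ are only literally valid in the strictly positive regime; one must check that the equivalence still holds (trivially, by forcing the relevant variables to vanish) on the coordinate hyperplanes, and that the auxiliary variable $y$ can always be chosen nonnegative. Everything else is exponent arithmetic dictated by the three threshold conditions on $s_m$ relative to $\frac{\hat s}{2}$, $2^{l-1}$, and $2^l$.
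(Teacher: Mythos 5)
Your proposal is correct: the paper itself omits the proof (declaring it ``straightforward''), and your argument --- substituting $y=x_{m+1}^2/x_m$ in the forward direction, raising $x_my\ge x_{m+1}^2$ to the power $\hat{s}/2$, $s_m$, or $2^{l-1}$ and multiplying through in the reverse direction, with the zero boundary cases checked separately --- is precisely the routine verification the author has in mind, and your exponent bookkeeping in all four branches checks out. Your side remarks on the role of the thresholds (that $s_m\ge\hat{s}/2$ keeps $2s_m-\hat{s}\ge0$ and that the splits at $2^{l-1}$ and $2^l$ serve the recursion rather than the equivalence itself) are also accurate.
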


\begin{lemma}\label{sec5:lm3}
Let $s_1,\ldots,s_m\in\N^*$ be a tuple of integers and assume $s_1=s_2$. Then,
\begin{equation*}
\prod_{k=1}^mx_k^{s_k}\ge x_{m+1}^{\hat{s}}\iff\exists y\ge0\text{ s.t. }
   \prod_{k=3}^{m}x_k^{s_k}y^{2s_2}\ge x_{m+1}^{\hat{s}}, x_1x_2\ge y^2.
\end{equation*}
\end{lemma}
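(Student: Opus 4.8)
The plan is to verify the biconditional directly, showing each direction by an explicit substitution. This is a special case of the reasoning already used for Lemma \ref{sec5:lm1} (taking $i=1$, $j=2$, $\gamma=s_1=s_2$), but since the hypothesis $s_1=s_2$ forces both exponents to vanish simultaneously, the statement deserves its own short argument.

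For the forward direction, suppose $x_1,\ldots,x_{m+1}\ge 0$ satisfy $\prod_{k=1}^m x_k^{s_k}\ge x_{m+1}^{\hat s}$. Set $y\coloneqq\sqrt{x_1 x_2}\ge 0$, so that $x_1 x_2\ge y^2$ holds (with equality). Then $y^{2s_2}=(x_1x_2)^{s_2}=x_1^{s_1}x_2^{s_2}$ using $s_1=s_2$, hence $\prod_{k=3}^m x_k^{s_k}\,y^{2s_2}=\prod_{k=1}^m x_k^{s_k}\ge x_{m+1}^{\hat s}$, as required. For the reverse direction, suppose there exists $y\ge 0$ with $\prod_{k=3}^m x_k^{s_k}\,y^{2s_2}\ge x_{m+1}^{\hat s}$ and $x_1 x_2\ge y^2$. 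Since all quantities are nonnegative, $x_1x_2\ge y^2$ gives $(x_1x_2)^{s_2}\ge y^{2s_2}$, and therefore
\begin{equation*}
\prod_{k=1}^m x_k^{s_k}=x_1^{s_1}x_2^{s_2}\prod_{k=3}^m x_k^{s_k}=(x_1x_2)^{s_2}\prod_{k=3}^m x_k^{s_k}\ge y^{2s_2}\prod_{k=3}^m x_k^{s_k}\ge x_{m+1}^{\hat s}.
\end{equation*}
This completes both directions.

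I do not anticipate a genuine obstacle here; the only point requiring a moment's care is that the variables are constrained to be nonnegative so that raising the inequality $x_1x_2\ge y^2$ to the power $s_2\in\N^*$ preserves the direction, and that the forward direction needs a concrete witness $y$, for which $y=\sqrt{x_1x_2}$ works. The hypothesis $s_1=s_2$ is used exactly once in each direction, to collapse $x_1^{s_1}x_2^{s_2}$ into $(x_1x_2)^{s_2}$.
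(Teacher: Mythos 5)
Your proof is correct and is exactly the straightforward verification the paper has in mind (the paper omits the proof, calling it straightforward, and your argument matches the substitution pattern of Lemma \ref{sec5:lm1} with $i=1$, $j=2$, $\gamma=s_1=s_2$). The witness $y=\sqrt{x_1x_2}$ for the forward direction and the monotonicity of $t\mapsto t^{s_2}$ on nonnegatives for the reverse direction are precisely the two points that need to be checked, and you check both.
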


\begin{lemma}\label{sec5:lm4}
Let $s_1,\ldots,s_m\in\N^*$ be a tuple of integers. Assume $l=\left\lceil\log_2\left(\hat{s}\right)\right\rceil$ and $s_1$ is the unique odd number among $s_1,\ldots,s_m$, which satisfies $s_1\le 2^l-\hat{s}$. Then,
\begin{equation*}
\prod_{k=1}^mx_k^{s_k}\ge x_{m+1}^{\hat{s}}\iff\exists y\ge0\text{ s.t. }
   \prod_{k=2}^{m}x_k^{\frac{s_k}{2}}y^{s_1}\ge x_{m+1}^{\frac{\hat{s}+s_1}{2}}, x_1x_{m+1}\ge y^2.
\end{equation*}
\end{lemma}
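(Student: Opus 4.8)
The plan is to establish the two implications directly, after a short parity check that makes the right-hand side well defined. Since $s_1$ is the \emph{unique} odd entry of the tuple, we have $s_k/2\in\N$ for every $k=2,\ldots,m$, and $\hat s\equiv s_1\pmod2$ is odd, so $(\hat s+s_1)/2\in\N^*$; moreover $\sum_{k=2}^m\frac{s_k}{2}+s_1=\frac{\hat s+s_1}{2}$, so the right-hand inequality is a genuine weighted geometric mean inequality. (The hypothesis $s_1\le2^l-\hat s$ is not needed for the equivalence itself; it only guarantees $(\hat s+s_1)/2\le2^{l-1}$, hence $\lceil\log_2((\hat s+s_1)/2)\rceil\le l-1$, which is what makes this reduction useful inside Algorithm~\ref{sec5:alg1}.)

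For the forward implication, assume $\prod_{k=1}^m x_k^{s_k}\ge x_{m+1}^{\hat s}$ and take the witness $y\coloneqq\sqrt{x_1x_{m+1}}\ge0$. Then $x_1x_{m+1}=y^2$, so the second constraint holds with equality, and for the first I would square its left-hand side:
\[
\Bigl(\prod_{k=2}^m x_k^{s_k/2}\,y^{s_1}\Bigr)^{2}=\Bigl(\prod_{k=2}^m x_k^{s_k}\Bigr)(x_1x_{m+1})^{s_1}=\Bigl(\prod_{k=1}^m x_k^{s_k}\Bigr)x_{m+1}^{s_1}\ge x_{m+1}^{\hat s}\,x_{m+1}^{s_1}=x_{m+1}^{\hat s+s_1},
\]
where the inequality uses the hypothesis; taking square roots gives the first constraint.

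For the reverse implication, suppose some $y\ge0$ satisfies both constraints. If $y=0$ the first constraint forces $x_{m+1}=0$ (its exponent $(\hat s+s_1)/2$ is positive), and the target inequality is trivial; likewise it is trivial if $x_{m+1}=0$. So we may assume $y,x_{m+1}>0$, and then $x_1x_{m+1}\ge y^2>0$ forces $x_1>0$. Raising $x_1x_{m+1}\ge y^2$ to the power $s_1$ gives $x_1^{s_1}x_{m+1}^{s_1}\ge y^{2s_1}$, while squaring the first constraint gives $\bigl(\prod_{k=2}^m x_k^{s_k}\bigr)y^{2s_1}\ge x_{m+1}^{\hat s+s_1}$, i.e.\ $\prod_{k=2}^m x_k^{s_k}\ge x_{m+1}^{\hat s+s_1}/y^{2s_1}$. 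Multiplying the latter by $x_1^{s_1}$ and bounding the right-hand side below via $y^{2s_1}\le x_1^{s_1}x_{m+1}^{s_1}$ yields $x_1^{s_1}\prod_{k=2}^m x_k^{s_k}\ge x_{m+1}^{\hat s}$, which is the desired inequality.

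The argument is elementary throughout; the only points calling for care are the parity bookkeeping that keeps the new exponents integral, and the degenerate cases $y=0$ or $x_{m+1}=0$, where division is not allowed. I do not anticipate a genuine obstacle.
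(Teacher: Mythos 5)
Your proof is correct. The paper offers no proof of this lemma (it is dismissed as ``straightforward''), and your argument --- the witness $y=\sqrt{x_1x_{m+1}}$ for the forward direction, squaring and the bound $y^{2s_1}\le x_1^{s_1}x_{m+1}^{s_1}$ for the reverse, with the degenerate cases $y=0$ and $x_{m+1}=0$ handled separately --- is exactly the intended verification. Your side remarks are also accurate: the parity hypothesis is what keeps the new exponents integral, and $s_1\le 2^l-\hat s$ plays no role in the equivalence itself but guarantees $(\hat s+s_1)/2\le 2^{l-1}$, which is what makes the reduction pay off inside Algorithm~\ref{sec5:alg1}.
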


By invoking any of Lemmas \ref{sec5:lm2}--\ref{sec5:lm4} if appliable, we get either a reduction of $\hat{s}$ by a factor $2$ or a decrease of $m$ by $1$ at the price of one quadratic inequality. We thereby embed Lemmas \ref{sec5:lm2}--\ref{sec5:lm4} into Algorithm~\ref{sec5:alg1} as detailed in Algorithm \ref{sec5:alg2}.

\begin{algorithm}
\renewcommand{\algorithmicrequire}{\textbf{Input:}}
\renewcommand{\algorithmicensure}{\textbf{Output:}}
\caption{{\tt Heuristic}$(s_1,\ldots,s_m; t=m+1)$}\label{sec5:alg2}
\begin{algorithmic}[1]
\REQUIRE
A tuple of nonnegative integers $s_1,\ldots,s_m$ and a positive integer $t$ equaling $m+1$ by default
\ENSURE
A configuration $\{(i_k,j_k,t_k)\}_k$ that determines a simple second-order cone representation for $x_1^{s_1}\cdots x_m^{s_m}\ge x_{t}^{\hat{s}}$
\IF{$t=m+1$}
\STATE $m\leftarrow m+1$, $s_m\leftarrow0$;
\ENDIF
\STATE $l\leftarrow\left\lceil\log_2\left(\hat{s}\right)\right\rceil$;
\IF{$\exists i,j\in\{1,\ldots,m\}$ such that $s_i=s_j>0$}
\IF{$s_i\ne2^{l-1}$}
\STATE $m\leftarrow m+1$, $s_{m}\leftarrow 2s_i$, $s_i\leftarrow0$, $s_j\leftarrow0$;
\RETURN {\tt Heuristic}$(s_1,\ldots,s_m; t)\cup\{(i,j,m)\}$;
\ELSE
\RETURN $\{(i,j,t)\}$;
\ENDIF
\ENDIF
\STATE Find $k\in\{1,\ldots,m\}$ such that $s_k=\max\,\{s_i\}_{i=1}^m$;
\IF{$2s_k\ge\hat{s}$}
\IF{$2s_k=\hat{s}$}
\STATE $s_k\leftarrow0$;
\ELSIF{$s_k\le2^{l-1}$}
\STATE $s_{t}\leftarrow2s_k-\hat{s},s_k\leftarrow0$;
\ELSIF{$\hat{s}<2^{l}$}
\STATE $s_{t}\leftarrow2^l-\hat{s},s_{k}\leftarrow s_k-2^{l-1}$;
\ELSE
\STATE $s_{k}\leftarrow s_k-2^{l-1}$;
\ENDIF
\RETURN {\tt Heuristic}$(s_1,\ldots,s_m; m+1)\cup\{(k,m+1,t)\}$;
\ENDIF
\IF{$\argmin\,\{\Delta(s_i)\}_{i=1}^m=\{r\}$ and $s_r\le2^l-\hat{s}$}
\STATE $m\leftarrow m+1,s_{m}\leftarrow2s_r,s_r\leftarrow0$;
\RETURN {\tt Heuristic}$(s_1,\ldots,s_m; t)\cup\{(r,t,m)\}$;
\ENDIF
\STATE $s_{t}\leftarrow2^l-\hat{s}$;
\IF{$\exists i\in\{1,\ldots,m\}, i\ne t$ such that $s_t=s_i>0$}
\STATE $m\leftarrow m+1$, $s_{m}\leftarrow2s_i$, $s_i\leftarrow0$, $s_t\leftarrow0$;
\RETURN {\tt Heuristic}$(s_1,\ldots,s_m; t)\cup\{(i,t,m)\}$;
\ENDIF
\STATE Select a pair $i,j\in\{1,\ldots,m\}$ and an integer $\gamma$ satisfying  $0<\gamma\le\min\,\{s_i,s_j\}$;
\STATE $m\leftarrow m+1$, $s_{m}\leftarrow 2\gamma$, $s_i\leftarrow s_i-\gamma$, $s_j\leftarrow s_j-\gamma$;
\RETURN {\tt Heuristic}$(s_1,\ldots,s_m; t)\cup\{(i,j,m)\}$;
\end{algorithmic}
\end{algorithm}


\subsection{Traversal algorithms}\label{sec5:ta}
Instead of considering only the ``maximal'' pair at each iteration (Step 4 of Algorithm~\ref{sec5:alg1}), we may take into account all pairs $i,j$ such that $\Omega(s_i)\cap\Omega(s_j)\neq\emptyset$ (hereafter referred to as the common-one strategy) or $\Delta(s_i)=\Delta(s_j)=\min\,\{s_k\}_{k=1}^m$ (hereafter referred to as the power-two strategy). Such traversal algorithms enable us to obtain a simple second-order cone representation of possibly smaller size for a weighted geometric mean inequality by spending more time.

\subsection{A brute-force algorithm}\label{sec5:bf}
Besides the heuristics aiming to efficiently produce approximately optimal simple second-order cone representations, we also propose a brute-force algorithm to compute an exact optimal simple second-order cone representation for a weighted geometric mean inequality.

Let the configuration $\{(i_k,j_k,m+k)\}_{k=1}^n$ determine a simple second-order cone representation for $x_1^{s_1}\cdots x_m^{s_m}\ge x_{m+1}^{\hat{s}}$ (equivalently, an $(\A,\a_{m+1})$-mediated set with $\A=\{\a_i\}_{i=1}^m$ being a trellis and $\a_{m+1}=\sum_{i=1}^{m}\frac{s_i}{\hat{s}}\a_i$) so that \eqref{sec3:eq2} holds.
Denote the coefficient matrix of \eqref{sec3:eq2} by $C$ and let $A=[\a_1,\ldots,\a_{m+n}]^{\intercal}$ with $\a_i$'s being viewed as column vectors, so that we can rewrite \eqref{sec3:eq2} in matrix form as $CA=0$.
By construction, the equality $\sum_{i=1}^{m}s_i\a_i=\hat{s}\a_{m+1}$ is implied by \eqref{sec3:eq2}, which means that the vector $(s_1,\ldots,s_m,-\hat{s},0,\ldots,0)$ belongs to the row space of $C$, i.e., there exist $\gamma_1,\ldots,\gamma_n\in\Q$ such that $(s_1,\ldots,s_m,-\hat{s},0,\ldots,0)=\sum_{t=1}^n\gamma_t\mathbf{c}_t$ with $\{\mathbf{c}_t\}_{t=1}^n$ being the row vectors of $C$.
It follows that the following linear system in variables $\gamma_1,\ldots,\gamma_n$ admits a rational solution:
\begin{equation}\label{sec5-eq1}
	\begin{cases}
		\sum_{i_t=k}\gamma_t+\sum_{j_t=k}\gamma_t=s_k,\quad k=1,\ldots,m,\\
		\sum_{i_t=k}\gamma_t+\sum_{j_t=k}\gamma_t=2\gamma_1-\hat{s},\quad k=m+1,\\
		\sum_{i_t=k}\gamma_t+\sum_{j_t=k}\gamma_t=2\gamma_{k-m},\quad k=m+2,\ldots,m+n.\\
	\end{cases}
\end{equation}
Conversely, if the linear system \eqref{sec5-eq1} admits a rational solution for some configuration $\{(i_k,j_k,m+k)\}_{k=1}^n$, then this configuration gives rise to \eqref{sec3:eq2}
and hence determines an $(\A,\a_{m+1})$-mediated set with $\a_{m+1}=\sum_{i=1}^{m}\frac{s_i}{\hat{s}}\a_i$ as well as a simple second-order cone representation for $x_1^{s_1}\cdots x_m^{s_m}\ge x_{m+1}^{\hat{s}}$. Building upon these facts, we give the brute-force algorithm for computing optimal simple second-order cone representations in Algorithm~\ref{sec5:alg3}.

\begin{algorithm}
	\renewcommand{\algorithmicrequire}{\textbf{Input:}}
	\renewcommand{\algorithmicensure}{\textbf{Output:}}
	\caption{{\tt Bruteforce}$(s_1,\ldots,s_m)$}\label{sec5:alg3}
	\begin{algorithmic}[1]
		\REQUIRE
		A tuple of positive integers $s_1,\ldots,s_m$ with $(s_1,\ldots,s_m)=1$
		\ENSURE A configuration $\{(i_k,j_k,m+k)\}_{k=1}^n$ that determines an optimal simple second-order cone representation for $x_1^{s_1}\cdots x_m^{s_m}\ge x_{m+1}^{\hat{s}}$
		\STATE $n\leftarrow\max\,\{\left\lceil\log_2\left(\hat{s}\right)\right\rceil,m-1\}$;  \COMMENT{Remark \ref{sec3:rm1}}
		\STATE Enumerate all legitimate configurations $\{(i_k,j_k,m+k)\}_{k=1}^n$ which are denoted by $\mathcal{T}_{m,n}$;
		\FOR{each $\mathcal{S}$ in $\mathcal{T}_{m,n}$}
		\IF{The system \eqref{sec5-eq1} is feasible}
		\RETURN $\mathcal{S}$;
		\ENDIF
		\ENDFOR
		\STATE $n\leftarrow n+1$;
		\STATE \textbf{goto} Step 2;
	\end{algorithmic}
\end{algorithm}

The most expensive part of Algorithm~\ref{sec5:alg3} is Step 2 as for given $m,n$, the number of legitimate configurations $\{(i_k,j_k,m+k)\}_{k=1}^n$ might be very large. In order to speed up the enumeration, we thereby impose some conditions on legitimate configurations, which are stated in the following proposition.

\begin{proposition}\label{sec5:prop1}
There is no loss of generality in assuming that any legitimate configuration $\{(i_k,j_k,m+k)\}_{k=1}^n$ satisfies the following conditions:
\begin{enumerate}[(1)]
	\item $\{1,\ldots,m\}\cup\{m+2,\ldots,m+n\}\subseteq\cup_{k=1}^n\{i_k,j_k\}$;
	\item $i_k<j_k$;
	\item $i_k,j_k\ne m+k$;
	\item $(i_{k_1},j_{k_1})\ne(i_{k_2},j_{k_2})$ if $k_1\ne k_2$;
	\item $\{i_{k_1},j_{k_1},m+k_1\}\ne\{i_{k_2},j_{k_2},m+k_2\}$ if $k_1\ne k_2$;
	\item $t_k\ge m+k+1$ for $k=1,\ldots,n-1$;
	\item $i_1\le m,j_1=m+2$ or $i_1=m+2,j_1=m+3$;
	\item $i_k\le t_{k-1}, j_k\le t_{k-1}+1$ or $i_k=t_{k-1}+1, j_k=t_{k-1}+2$ for $k=2,\ldots,n$,
\end{enumerate}
where $t_k\coloneqq\max\,\{j_1,\ldots,j_k\}$ for $k=1,\ldots,n-1$.
\end{proposition}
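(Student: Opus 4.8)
The plan is to justify each of the eight normalizing conditions by an explicit symmetry or pruning argument, showing that if an optimal (legitimate) configuration exists at all, then one exists satisfying every condition simultaneously; since Algorithm~\ref{sec5:alg3} searches by increasing $n$, it suffices to know that the minimum-size configuration is recoverable under these restrictions. I would organize the conditions into three groups. First, the \emph{relabeling/symmetry} conditions (2), (3), (4), (5): condition (2) $i_k<j_k$ just fixes an ordering within each pair, permissible because the quadratic inequality $x_{i_k}x_{j_k}\ge x_{m+k}^2$ is symmetric in $i_k,j_k$; condition (3) $i_k,j_k\ne m+k$ holds because $\a_{i_k}+\a_{j_k}=2\a_{m+k}$ with one summand equal to $\a_{m+k}$ would force the other summand to equal $\a_{m+k}$ too, contradicting the ``two distinct points'' requirement in the definition of a mediated set (equivalently, $x_ix_i\ge x_i^2$ is vacuous); conditions (4) and (5) remove literally duplicated constraints, which can be deleted without changing the represented set, possibly after deleting the now-unused auxiliary variable and renumbering.

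Second, the \emph{support} condition (1): $\{1,\ldots,m\}\subseteq\cup_k\{i_k,j_k\}$ because every original variable must appear in the system (otherwise the equivalence in \eqref{sec3:eq1} fails — the geometric mean genuinely depends on all $x_i$ since all $s_i\ge1$), and $\{m+2,\ldots,m+n\}\subseteq\cup_k\{i_k,j_k\}$ by exactly the deletion argument already used in the proof of Theorem~\ref{sec3:thm1}: an auxiliary variable $\a_{m+k}$ not appearing on any right-hand side of \eqref{sec3:eq2} can be removed to give a smaller mediated set, contradicting minimality. Third, the \emph{ordering/canonical-form} conditions (6), (7), (8): here I would argue that the auxiliary points $\a_{m+2},\ldots,\a_{m+n}$ can be reindexed so that the ``new'' point introduced by the $k$-th equation only refers to points with smaller index plus at most the single next fresh index — i.e., we process the configuration as a rooted DAG and relabel in a topological-type order. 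Condition (6), $t_k\ge m+k+1$ where $t_k=\max\{j_1,\dots,j_k\}$, together with (7) and (8), amounts to saying that at each step either both referenced points already exist (indices $\le t_{k-1}$, with room for one more at $t_{k-1}+1$) or we introduce exactly one brand-new point (indices $t_{k-1}+1,t_{k-1}+2$); this is a standard ``peeling'' normalization for such tree/DAG enumerations and is justified by induction on $k$, reordering equations so that each newly-referenced vertex index is as small as possible.

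The main obstacle I anticipate is making the reindexing in conditions (6)--(8) genuinely rigorous: one must show that an arbitrary legitimate configuration can always be permuted (both the equations $k\mapsto\sigma(k)$ and the auxiliary variable labels $m+k\mapsto m+\tau(k)$, compatibly, so that $m+k$ remains the ``output'' of the $k$-th equation) into one satisfying these bounds, and that this permutation does not destroy conditions (1)--(5). The cleanest route is to set up the dependency structure — think of each triple $(i_k,j_k,m+k)$ as a hyperedge — observe it has no directed cycle among the auxiliary variables (else a finite set of points each an average of two others, impossible, cf.\ the argument in Theorem~\ref{sec3:thm2}), hence admits a linear extension; then define $t_k$ inductively and check that along such an extension, each step adds at most one new vertex, which forces precisely the alternatives in (7) and (8). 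I would present (1)--(5) in full detail (they are short), sketch the peeling argument for (6)--(8), and remark that none of these restrictions can exclude the optimum since each transformation either preserves size or strictly decreases it.
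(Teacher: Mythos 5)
Your proposal is correct and follows essentially the same route as the paper's (very terse) proof: condition (1) via the deletion argument from Theorem~\ref{sec3:thm1}, conditions (2)--(5) as symmetries and deduplications, and conditions (6)--(8) via minimality plus free relabelling of the auxiliary variables. One small point of emphasis: the paper justifies (6) purely by minimality --- if $t_k\le m+k$ for some $k<n$ then the first $k$ equations reference only indices in $\{1,\ldots,m+k\}$ and hence already form a self-contained configuration of size $k<n$ --- which is cleaner than folding (6) into the peeling normalization, although your closing remark that every transformation preserves or strictly decreases the size does cover this case.
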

\begin{proof}
(1) follows from the proof of Theorem \ref{sec3:thm1}. (2)--(5) are immediate from the definition. (6) is due to the fact that we are seeking a configuration of minimum size, and if $t_k< m+k+1$ for some $k\in\{1,\ldots,n-1\}$, then $\{(i_l,j_l,m+l)\}_{l=1}^{k}$ is a configuration of smaller size. (7)--(8) are because we can arbitrarily label the auxiliary variables $x_{m+2},\ldots,x_{m+n}$.
\end{proof}

Table \ref{tb3} shows the cardinality of $\mathcal{T}_{m,n}$ derived from Proposition \ref{sec5:prop1} for different $(m,n)$. We could see that $|\mathcal{T}_{m,n}|$ grows rapidly with $m,n$. However, for each $(m,n)$, the set $\mathcal{T}_{m,n}$ needs to be computed just once and then can be used forever.

\begin{table}[htbp]
	\caption{The number of configurations that satisfy the conditions in Proposition \ref{sec5:prop1}.}\label{tb3}
	\renewcommand\arraystretch{1.2}
	\centering
	\resizebox{\linewidth}{!}{
	\begin{tabular}{c|c|c|c|c|c|c|c|c|c}
		$(m,n)$&$(3,2)$&$(3,3)$&$(3,4)$&$(3,5)$&$(3,6)$&$(4,3)$&$(4,4)$&$(4,5)$&$(4,6)$\\
		\hline
		$|\mathcal{T}_{m,n}|$&$3$&$48$&$828$&$17178$&$419559$&$18$&$588$&$17016$&$514524$\\
	\end{tabular}}
\end{table}

\subsection{Numerical experiments}\label{sec5:ne}
The algorithms discussed above were implemented in the Julia package {\tt MiniSOC}, which is available at \href{https://github.com/wangjie212/MiniSOC}{https://github.com/wangjie212/MiniSOC}. Numerical experiments were performed on a desktop computer with Windows 10 system, Intel(R) Core(TM) i9-10900 CPU@2.80GHz and 32G RAM.

\subsubsection{Evaluating different heuristic algorithms}

To test the perfomance of different heuristic algorithms, we run them on the partitions $s_1+\cdots+s_m$ of the integer $\hat{s}=83$. The partitions are restricted to be of length $m=3,4,5,6$ and satisfy $(s_1,\ldots,s_m)=1$. We denote Algorithm \ref{sec5:alg2} implementing the greedy-common-one strategy (resp. the greedy-power-two strategy) by {\tt GreedyCommone} (resp. {\tt GreedyPowertwo}); we denote the traversal algorithm implementing the common-one strategy (resp. the power-two strategy) by {\tt TraversalCommone} (resp. {\tt TraversalPowertwo}). The results are reported in Table \ref{tb1}. The data in Table \ref{tb1} show that: (1) the greedy algorithms are significantly faster than the traversal algorithms when $m\ge5$; (2) the traversal algorithms may produce simple second-order cone representations of smaller size than the greedy algorithms; (3) {\tt GreedyPowertwo} not only runs faster than {\tt GreedyCommone} but also produces simple second-order cone representations of smaller size, and similar statement also applies to {\tt TraversalPowertwo} and {\tt TraversalCommone}.

\begin{table}[htbp]
\caption{Comparison of heuristic algorithms with $\hat{s}=83$. For each $m$, the first column indicates the sum of sizes of simple second-order cone representations over partitions of length $m$, and the second column indicates the total running time in seconds. The symbol - means running time $>1$ day.}\label{tb1}
\renewcommand\arraystretch{1.2}
\centering
\resizebox{\linewidth}{!}{
\begin{tabular}{c|c|c|c|c|c|c|c|c}
Algorithm&\multicolumn{2}{c|}{$m=3$}&\multicolumn{2}{c|}{$m=4$}&\multicolumn{2}{c|}{$m=5$}&\multicolumn{2}{c}{$m=6$}\\
\hline
{\tt GreedyPowertwo}&$4567$&$0.001$&$37996$&$0.01$&$196262$&$0.11$&$697083$&$0.40$\\
\hline
{\tt GreedyCommone}&$4695$&$0.025$&$39625$&$0.35$&$204927$&$2.36$&$728705$&$10.3$\\
\hline
{\tt TraversalPowertwo}&$4561$&$0.007$&$37648$&$0.25$&$192654$&$9.82$&$681705$&$90.0$\\
\hline
{\tt TraversalCommone}&$4660$&$0.311$&$38894$&$1074$&-&-&-&-\\
\end{tabular}}
\end{table}

We mention that for $m=3,4,5,6$, the number of different partitions of $83$ are $574$, $4109$, $18487$, $58767$, respectively, and the average sizes of second-order cone representations produced by the algorithm {\tt GreedyPowertwo} are $8.0$, $9.2$, $10.6$, $11.9$, respectively.

To further compare the algorithm {\tt GreedyPowertwo} with {\tt GreedyCommone}, we generate random integers $s_1,\ldots,s_m\in(0,10^t)$ with $m=10$. The related results are reported in Table \ref{tb7}, which confirms our observation that {\tt GreedyPowertwo} not only runs faster than {\tt GreedyCommone} (by a factor $\sim100$) but also produces second-order cone representations of smaller size.

\begin{table}[htbp]
\caption{Results of {\tt GreedyPowertwo} and {\tt GreedyCommone} on random instances. Each $t$ corresponds to three different random trials. For each algorithm, the first column indicates the size of second-order cone representation, and the second column indicates the running time in seconds.}\label{tb7}
\renewcommand\arraystretch{1.2}
\centering
\begin{tabular}{c|c|c|c|c}
$t$&\multicolumn{2}{c|}{{\tt GreedyPowertwo}}&\multicolumn{2}{c}{{\tt GreedyCommone}}\\
\hline
10&$91$&$0.0002$&$98$&$0.02$\\
&$90$&$0.0002$&$112$&$0.02$\\
&$91$&$0.0002$&$114$&$0.02$\\
12&$107$&$0.0002$&$135$&$0.03$\\
&$109$&$0.0003$&$119$&$0.03$\\
&$108$&$0.0003$&$121$&$0.04$\\
14&$133$&$0.0004$&$139$&$0.04$\\
&$122$&$0.0003$&$155$&$0.04$\\
&$128$&$0.0003$&$136$&$0.05$\\
16&$146$&$0.0004$&$179$&$0.07$\\
&$140$&$0.0004$&$152$&$0.06$\\
&$146$&$0.0004$&$195$&$0.07$\\
\end{tabular}
\end{table}

\subsubsection{Comparison with optimal simple second-order cone representations}

We provide the sizes of optimal simple second-order cone representations for trivariate weighted geometric mean inequalities $x_1^{s_1}x_2^{s_2}x_3^{s_3}\ge x_{4}^{\hat{s}}$ with $\hat{s}\le15$ in Table \ref{tb2}, which are obtained with the brute-force algorithm (Algorithm~\ref{sec5:alg3}).

\begin{sidewaystable}[htbp]
\caption{Sizes of optimal simple second-order cone representations for trivariate weighted geometric mean inequalities $x_1^{s_1}x_2^{s_2}x_3^{s_3}\ge x_{4}^{\hat{s}}$ (labelled by $(s_1,s_2,s_3)$) with $\hat{s}\le15$. We exclude the cases $\hat{s}=4,8$ in view of Corollary \ref{sec4:cor1}.}\label{tb2}
\renewcommand\arraystretch{1.2}
\centering
\begin{tabular}{l|c|c|c|c|c|c|c|c|c|c}
\multirow{2}{*}{$\hat{s}=3$}&$(1,1,1)$&&&&&&&&&\\
\cline{2-11}
&3&&&&&&&&&\\
\hline
\multirow{2}{*}{$\hat{s}=5$}&$(2,2,1)$&$(3,1,1)$&&&&&&&&\\
\cline{2-11}
&4&4&&&&&&&&\\
\hline
\multirow{2}{*}{$\hat{s}=6$}&$(3,2,1)$&$(4,1,1)$&&&&&&&&\\
\cline{2-11}
&3&3&&&&&&&&\\
\hline
\multirow{2}{*}{$\hat{s}=7$}&$(3,2,2)$&$(3,3,1)$&$(4,2,1)$&$(5,1,1)$&&&&&&\\
\cline{2-11}
&4&4&3&4&&&&&&\\
\hline
\multirow{2}{*}{$\hat{s}=9$}&$(4,3,2)$&$(4,4,1)$&$(5,2,2)$&$(5,3,1)$&$(6,2,1)$&$(7,1,1)$&&&&\\
\cline{2-11}
&4&5&5&5&4&5&&&&\\
\hline
\multirow{2}{*}{$\hat{s}=10$}&$(4,3,3)$&$(5,3,2)$&$(5,4,1)$&$(6,3,1)$&$(7,2,1)$&$(8,1,1)$&&&&\\
\cline{2-11}
&4&4&4&4&4&4&&&&\\
\hline
\multirow{2}{*}{$\hat{s}=11$}&$(4,4,3)$&$(5,3,3)$&$(5,4,2)$&$(5,5,1)$&$(6,3,2)$&$(6,4,1)$&$(7,2,2)$&$(7,3,1)$&$(8,2,1)$&$(9,1,1)$\\
\cline{2-11}
&5&5&4&5&4&4&5&5&4&5\\
\hline
\multirow{2}{*}{$\hat{s}=12$}&$(5,4,3)$&$(5,5,2)$&$(6,5,1)$&$(7,3,2)$&$(7,4,1)$&$(8,3,1)$&$(9,2,1)$&$(10,1,1)$&&\\
\cline{2-11}
&4&4&4&4&4&4&4&4&&\\
\hline
\multirow{4}{*}{$\hat{s}=13$}&$(5,4,4)$&$(5,5,3)$&$(6,4,3)$&$(6,5,2)$&$(6,6,1)$&$(7,3,3)$&$(7,4,2)$&$(7,5,1)$&$(8,3,2)$&$(8,4,1)$\\
\cline{2-11}
&5&5&4&5&5&5&4&5&4&4\\
\cline{2-11}
&$(9,2,2)$&$(9,3,1)$&$(10,2,1)$&$(11,1,1)$&&&&&&\\
\cline{2-11}
&5&5&5&5&&&&&&\\
\hline
\multirow{4}{*}{$\hat{s}=14$}&$(5,5,4)$&$(6,5,3)$&$(7,4,3)$&$(7,5,2)$&$(7,6,1)$&$(8,3,3)$&$(8,5,1)$&$(9,3,2)$&$(9,4,1)$&$(10,3,1)$\\
\cline{2-11}
&4&4&4&4&4&4&4&5&4&5\\
\cline{2-11}
&$(11,2,1)$&$(12,1,1)$&&&&&&&&\\
\cline{2-11}
&4&4&&&&&&&&\\
\hline
\multirow{4}{*}{$\hat{s}=15$}&$(6,5,4)$&$(7,4,4)$&$(7,5,3)$&$(7,6,2)$&$(7,7,1)$&$(8,4,3)$&$(8,5,2)$&$(8,6,1)$&$(9,4,2)$&$(9,5,1)$\\
\cline{2-11}
&5&5&6&5&5&4&4&4&4&5\\
\cline{2-11}
&$(10,3,2)$&$(10,4,1)$&$(11,2,2)$&$(11,3,1)$&$(12,2,1)$&$(13,1,1)$&&&&\\
\cline{2-11}
&5&4&5&5&4&5&&&&\\
\end{tabular}
\end{sidewaystable}

According to Table \ref{tb2}, we emphasize that the four heuristic algorithms produce optimal simple second-order cone representations for all trivariate weighted geometric mean inequalities $x_1^{s_1}x_2^{s_2}x_3^{s_3}\ge x_{4}^{\hat{s}}$ with $\hat{s}\le15$ but $4$ instances corresponding to $(s_1,s_2,s_3)=(5,4,3)$, $(7,3,2)$, $(6,5,3)$, $(11,2,1)$, respectively. For $(s_1,s_2,s_3)=(5,4,3)$, $(7,3,2)$ and $(11,2,1)$, the four heuristic algorithms yield simple second-order cone representations of size $5$, while the optimal size is $4$; for $(s_1,s_2,s_3)=(6,5,3)$, {\tt GreedyCommone} and {\tt TraversalCommone} yield simple second-order cone representations of size $6$, and {\tt GreedyPowertwo} and {\tt TraversalPowertwo} yield simple second-order cone representations of size $5$, while the optimal size is $4$.

\section{Applications}\label{sec5}
In this section, we give three applications of the proposed algorithm {\tt GreedyPowertwo} in polynomial optimization, matrix optimization, quantum information, respectively. All numerical experiments were performed on a desktop computer with Windows 10 system, Intel(R) Core(TM) i9-10900 CPU@2.80GHz and 32G RAM.
\subsection{SONC optimization}
Let $\R[\x]=\R[x_1,\ldots,x_n]$ be the ring of real $n$-variate polynomials. For $\a=(\alpha_i)_i\in\N^n$, let $\x^{\a}\coloneqq x_1^{\alpha_1}\cdots x_n^{\alpha_n}$. Suppose that $\A=\{\a_1,\ldots,\a_{n+1}\}\subseteq(2\N)^{n}$ is a trellis. A polynomial $f=\sum_{i=1}^{n+1}c_{i}\x^{\a_i}-d\x^{\b}\in\R[\x]$ is called a \emph{circuit polynomial} if $c_{i}>0$ for $i=1,\ldots,n+1$ and $\b\in\Conv(\A)^{\circ}\cap\N^{n}$ \cite{iw}. The nonnegativity of a circuit polynomial $f$ on $\R^n$ can be easily verified by
\begin{equation}\label{sec6:eq3}
	f\ge0\iff 
	\begin{cases}
		d<0\text{ or }\prod_{i=1}^{n+1}(c_{i}/\lambda_{i})^{\lambda_{i}}\ge d\ge0,&\text{ if }\b\in(2\N)^n,\\
		\prod_{i=1}^{n+1}(c_{i}/\lambda_{i})^{\lambda_{i}}\ge |d|,&\text{ if }\b\notin(2\N)^n,\\
	\end{cases}
\end{equation}
where $(\lambda_{i})_{i=1}^{n+1}\in\Q^{n+1}_+$ are the barycentric coordinates of $\b$ with respect to $\A$ satisfying $\b=\sum_{i=1}^{n+1}\lambda_{i}\a_i$ and $\sum_{i=1}^{n+1}\lambda_{i}=1$. Note that the conditions in \eqref{sec6:eq3} admit second-order cone representations as the inequalities $\prod_{i=1}^{n+1}(c_{i}/\lambda_{i})^{\lambda_{i}}\ge d\ge0$ are equivalent to
\begin{equation}\label{sec6:eq1}
	\exists y\ge0 \,\text{ s.t. } \prod_{i=1}^{n+1}c_{i}^{\lambda_{i}}\ge y, \, d\prod_{i=1}^{n+1}\lambda_{i}^{\lambda_{i}}=y,
\end{equation}
and the inequality $\prod_{i=1}^{n+1}(c_{i}/\lambda_{i})^{\lambda_{i}}\ge|d|$ is equivalent to
\begin{equation}\label{sec6:eq0}
	\exists y\ge0 \,\text{ s.t. } \prod_{i=1}^{n+1}c_{i}^{\lambda_{i}}\ge y, \,|d|\prod_{i=1}^{n+1}\lambda_{i}^{\lambda_{i}}\le y.
\end{equation}

One can certify the nonnegativity of a polynomial $f$ by decomposing it into a \emph{sum of nonnegative circuit polynomials (SONC)}. Furthermore, one can provide a lower bound on the global minimum of $f$ by solving the following SONC optimization problem:
\begin{equation}\label{sec6:eq2}
	\begin{cases}
		\sup&\gamma\\
		\text{s.t.} &f-\gamma\text{ is a SONC}.
	\end{cases}
\end{equation}
By virtue of \eqref{sec6:eq1}--\eqref{sec6:eq0}, \eqref{sec6:eq2} can be further modeled as a second-order cone program; see \cite{magron2023sonc} for more details.

We select $30$ randomly generated polynomials from the database provided by Seidler and de Wolff in \cite{se}, and solve the related SONC optimization problem \eqref{sec6:eq2} with the SOCP solver {\tt Mosek 9.0}\footnote{The computation was performed with the Julia package {\tt SONCSOCP} which is available at \href{https://github.com/wangjie212/SONCSOCP}{https://github.com/wangjie212/SONCSOCP}.}. For each instance, we use two algorithms to generate the required second-order cone representations: {\tt GreedyPowertwo} and the one proposed in the paper \cite{kian2019minimal}. The results are reported in Table \ref{tb5}. It is evident that the approach with {\tt GreedyPowertwo} is more efficient than the one with the algorithm from \cite{kian2019minimal}, sometimes by orders of magnitude.

\begin{table}[htbp]
\caption{Results for the SONC optimization problem \eqref{sec6:eq2}. $n,d,t$ denote the number of variables, the degree, the number of terms of the polynomial $f$, respectively; the column labelled by ``opt'' indicates the optimum; the columns labelled by ``W'' and ``K-B-G'' indicate the running time in seconds of the approaches with {\tt GreedyPowertwo} and the algorithm from \cite{kian2019minimal}, respectively.}\label{tb5}
\renewcommand\arraystretch{1.2}
\centering
\begin{tabular}{l|c|c|c}
$(n,d,t)$&opt&W&K-B-G\\
\hline
$(10,40,100)$&5.4251&0.35&1.55\\
$(10,40,200)$&7.7161&0.78&3.26\\
$(10,40,300)$&38.662&1.15&4.73\\
\hline
$(10,50,100)$&0.1972&0.35&1.73\\
$(10,50,200)$&4.8642&0.91&4.00\\
$(10,50,300)$&7.0048&1.41&6.09\\
\hline
$(10,60,100)$&2.5232&0.43&2.57\\
$(10,60,200)$&26.017&1.04&4.92\\
$(10,60,300)$&23.417&1.76&8.27\\
\hline
$(20,40,100)$&2.8614&0.47&9.90\\
$(20,40,200)$&9.8565&1.21&24.0\\
$(20,40,300)$&1.9062&2.20&35.6\\
\hline
$(20,50,100)$&2.1435&0.53&12.9\\
$(20,50,200)$&8.1728&1.46&26.8\\
$(20,50,300)$&14.922&2.61&49.2\\
\hline
$(20,60,100)$&4.8874&0.59&16.1\\
$(20,60,200)$&3.0012&1.60&36.7\\
$(20,60,300)$&10.640&2.87&59.7\\
\hline
$(30,40,100)$&1.3083&0.86&45.2\\
$(30,40,200)$&3.3180&2.39&126\\
$(30,40,300)$&9.3044&4.07&198\\
\hline
$(30,50,100)$&3.3983&1.03&80.1\\
$(30,50,200)$&6.0753&2.79&186\\
$(30,50,300)$&1.9555&5.08&279\\
\hline
$(30,60,100)$&0.3946&1.17&83.6\\
$(30,60,200)$&5.8770&2.99&200\\
$(30,60,300)$&6.8931&4.97&348\\
\hline
$(40,50,100)$&3.3078&1.72&266\\
$(40,50,200)$&4.7519&5.50&779\\
$(40,50,300)$&2.9359&9.37&1142\\
\end{tabular}
\end{table}

\subsection{Matrix optimization}
\subsubsection{Semidefinite representation for matrix geometric mean}
For $A\in\bS_{++}^n$, $B\in\bS_{+}^n$ and $\lambda\in[0,1]$, the $\lambda$-weighted geometric mean of $A$ and $B$ is defined by
\begin{equation*}
	G_{\lambda}(A,B)=A\#_{\lambda}B\coloneqq A^{\frac{1}{2}}\left(A^{-\frac{1}{2}}BA^{-\frac{1}{2}}\right)^{\lambda}A^{\frac{1}{2}}.
\end{equation*}
It can be shown that a second-order cone representation drawn from a successive minimum mediated sequence for a scalar geometric mean can be lifted to a semidefinite representation for the corresponding matrix geometric mean. That means, Algorithm~\ref{sec4:alg1} can be readily adapted to produce a semidefinite representation for the matrix geometric mean $G_{\lambda}$ when $\lambda$ is a rational number. 

\begin{theorem}\label{sec6:thm0}
Let $\lambda=\frac{q}{p}$ with $p,q\in\N^*,0<q<p$ and $(p,q)=1$.
Suppose that the scalar geometric mean inequality $x_1^{1-\lambda}x_2^{\lambda}\ge x_3$ admits a second-order cone representation drawn from a successive minimum mediated sequence: $x_{a}x_{b}\ge x_4^2$, $x_{i-1}x_{k_i}\ge x_{i}^2,i=5,\ldots,l$, where $a,b\in\{1,2,3\}, k_i\in\{1,2,\ldots,i-2\}$, $l=\left\lceil\log_2(p)\right\rceil+3$, and $x_l=x_3$. Then the matrix geometric mean $G_{\lambda}$ admits a semidefinite representation:
\begin{align*}
\{(X_1,X_2,T)\in\bS_{++}^n\times\bS_+^n\times\bS^n&\mid G_{\lambda}(X_1,X_2)\succeq T\}=\\
\bigg\{(X_1,X_2,T)\in\bS_{++}^n\times\bS_+^n\times\bS^n\,&\,\bigg|\,\,\exists \{X_{i}\}_{i=3}^l\subseteq\bS^n\hbox{\rm{ s.t. }}X_3=X_l\succeq T\hbox{\rm{ and}}\\
&\,\,\begin{bmatrix}X_{a}&X_{4}\\X_{4}&X_{b}\end{bmatrix}\succeq0, \begin{bmatrix}X_{i-1}&X_{i}\\X_{i}&X_{k_i}\end{bmatrix}\succeq0 \hbox{\rm{ for }} i=5,\ldots,l\bigg\}.
\end{align*}
\end{theorem}
\begin{proof}
By adapting the proof of \cite[Theorem 4.4]{sagnol2013}, one can show that the matrix geometric mean $G_{\lambda}$ satisfies the following extremal property:
\begin{align*}
    G_{\lambda}(X_1,X_2)=\max_{\succeq}\bigg\{X\in\bS_+^n\,&\,\bigg|\,\,\exists \{X_{i}\}_{i=3}^l\subseteq\bS^n\hbox{\rm{ s.t. }}X_3=X_l=X\hbox{\rm{ and}}\\
    &\,\,\begin{bmatrix}X_{a}&X_{4}\\X_{4}&X_{b}\end{bmatrix}\succeq0, \begin{bmatrix}X_{i-1}&X_{i}\\X_{i}&X_{k_i}\end{bmatrix}\succeq0 \hbox{\rm{ for }} i=5,\ldots,l\bigg\},
\end{align*}
where $\max_{\succeq}$ takes the largest element with respect to the Löwner ordering (i.e., $A\succeq B$ if and only if $A-B\succeq0$). The desired conclusion then easily follows.
\end{proof}

\begin{corollary}\label{sec6:thm1}
Let $\lambda=\frac{q}{p}$ with $p,q\in\N^*,0<q<p$ and $(p,q)=1$. Then $G_{\lambda}$ admits a semidefinite representation with $\left\lceil\log_2(p)\right\rceil$ linear matrix
inequalities of size $2n\times2n$ and one linear matrix
inequality of size $n\times n$.
\end{corollary}

\begin{remark}
As a comparison, the semidefinite representation for $G_{\lambda}$ provided in \cite{fawzi2017lieb,sagnol2013} needs as many as $2\left\lceil\log_2(p)\right\rceil-1$ linear matrix inequalities of size $2n\times2n$ and one linear matrix inequality of size $n\times n$.
\end{remark}

\subsubsection{Semidefinite representation for the multivariate generalization of Lieb's function}

Given $(\lambda_i)_{i=1}^m\in\Q_+^m$ with $\sum_{i=1}^m\lambda_i=1$, the multivariate generalization of Lieb's function is defined by
\begin{equation}\label{sec6:eq4}
	(A_1,\ldots,A_m)\in\bS_+^{n_1}\times\cdots\times\bS_+^{n_m}\mapsto A_1^{\lambda_1}\otimes\cdots\otimes A_m^{\lambda_m}\in\bS_+^{n_1\cdots n_m},
\end{equation}
where $\otimes$ denotes the Kronecker product of matrices.
We are able to give a semidefinite representation for \eqref{sec6:eq4} by iteratively using
\begin{equation*}
	A_1^{\lambda_1}\otimes\cdots\otimes A_m^{\lambda_m}\succeq T
	\iff \exists S\in\bS_+^{n_1n_2}\text{ s.t. }
	\begin{cases}
		A_1^{\frac{\lambda_1}{\lambda_1+\lambda_2}}\otimes A_2^{\frac{\lambda_2}{\lambda_1+\lambda_2}}\succeq S,\\
		S^{\lambda_1+\lambda_2}\otimes A_3^{\lambda_3}\otimes\cdots\otimes A_m^{\lambda_m}\succeq T,
	\end{cases}
\end{equation*}
and applying Corollary \ref{sec6:thm1} to
\begin{equation}
	A^{1-\lambda}\otimes B^{\lambda}=(A\otimes I)\#_{\lambda}(I\otimes B),
\end{equation}
where $I$ denotes the identity matrix of appropriate size.

Let us consider the following trace optimization problem:
\begin{equation}\label{sec6:eq5}
	\begin{cases}
		\sup\limits_{w_1,w_2,w_3} &\tr\left((w_1A_1+A_2)^{\lambda_1}\otimes(w_2A_3+A_4)^{\lambda_2}\otimes(w_3A_5+A_6)^{\lambda_3}\right)\\
		\,\,\,\,\,\,\text{s.t.} &w_1+w_2+w_3=1,\\
		&w_1,w_2,w_3\ge0,
	\end{cases}
\end{equation}
where $(\lambda_i)_{i=1}^3\in\Q_+^3$ with $\sum_{i=1}^3\lambda_i=1$, and $A_1,\ldots,A_6\in\bS_+^{n}$ are randomly generated positive definite matrices. By introducing a matrix variable $T$, this problem is equivalent to
\begin{equation}\label{sec6:eq6}
	\begin{cases}
		\sup\limits_{w_1,w_2,w_3} &\tr(T)\\
		\,\,\,\,\,\,\text{s.t.} &(w_1A_1+A_2)^{\lambda_1}\otimes(w_2A_3+A_4)^{\lambda_2}\otimes(w_3A_5+A_6)^{\lambda_3}\succeq T,\\
		&w_1+w_2+w_3=1,\\
		&w_1,w_2,w_3\ge0.
	\end{cases}
\end{equation}
We can further convert \eqref{sec6:eq6} into a semidefinite program (SDP) using the semidefinite representation for \eqref{sec6:eq4}. In Table \ref{tb4}, we present the results of solving \eqref{sec6:eq5} with different $(\lambda_1,\lambda_2,\lambda_3)$ and $n=2,3$\footnote{The script is available at \href{https://github.com/wangjie212/MiniSOC}{https://github.com/wangjie212/MiniSOC}.}. Here {\tt Mosek 9.0} serves as the SDP solver.

\begin{table}[htbp]
	\caption{Results for the trace optimization problem \eqref{sec6:eq4}. For each $n$, ``opt'' means the optimum, and ``time'' means the running time in seconds.}\label{tb4}
	\renewcommand\arraystretch{1.2}
	\centering
	\begin{tabular}{c|c|c||c|c}
		\multirow{2}{*}{$(\lambda_1,\lambda_2,\lambda_3)$}&\multicolumn{2}{c||}{$n=2$}&\multicolumn{2}{c}{$n=3$}\\
		\cline{2-5}
		&opt&time&opt&time\\
		\hline
		$(\frac{4}{9},\frac{3}{9},\frac{2}{9})$&4.656&0.03&10.46&2.38\\
		\hline
		$(\frac{8}{15},\frac{4}{15},\frac{3}{15})$&4.621&0.03&8.976&2.07\\
		\hline
		$(\frac{25}{44},\frac{10}{44},\frac{9}{44})$&5.054&0.05&9.474&3.13\\
		\hline
		$(\frac{37}{83},\frac{24}{83},\frac{22}{83})$&5.065&0.06&10.39&4.31\\
		\hline
		$(\frac{72}{169},\frac{47}{169},\frac{40}{169})$&4.579&0.07&10.03&5.57\\
		\hline
		$(\frac{159}{278},\frac{64}{278},\frac{55}{278})$&4.972&0.07&9.801&7.12\\
		\hline
		$(\frac{212}{453},\frac{133}{453},\frac{108}{453})$&4.039&0.07&9.188&5.92\\
		\hline
		$(\frac{391}{882},\frac{280}{882},\frac{211}{882})$&4.755&0.08&9.654&7.08\\
	\end{tabular}
\end{table}

\subsection{Quantum information}
Building on simple second-order cone representations for bivariate weighted geometric means, Fawzi and Saunderson provided semidefinite representations for several matrix functions arising from quantum information \cite{fawzi2017lieb}. We record their results below.

Suppose that $A\in\bS_+^n$, $B\in\bS_+^m$ are positive definite matrices and $\lambda$ is a rational number in $(0,1)$. The semidefinite representation for Lieb's function $F_{\lambda}(A,B)\coloneqq\tr(K^{\intercal}A^{1-\lambda}KB^{\lambda})$ ($K\in\R^{n\times m}$ are fixed) can be obtained via
\begin{equation}
	\tr(K^{\intercal}A^{1-\lambda}KB^{\lambda})\ge t \iff\exists T\in\bS^{nm}_+\text{ s.t. }
	\begin{cases}
		A^{1-\lambda}\otimes B^{\lambda}\succeq T,\\
		\vec(K)^{\intercal}T\vec(K)\ge t,
	\end{cases}
\end{equation}
where $\vec(K)$ is a column vector of size $nm$ obtained by concatenating the rows of $K$.
The semidefinite representation for the Tsallis entropy $S_{\lambda}(A)\coloneqq\frac{1}{\lambda}\tr(A^{1-\lambda}-A)$
can be obtained via
\begin{equation}
	\frac{1}{\lambda}\tr(A^{1-\lambda}-A)\ge t \iff\exists T\in\bS^{n}\text{ s.t. }
	\begin{cases}
		A\#_{\lambda} I\succeq T,\\
		\frac{1}{\lambda}\tr(T-A)\ge t,
	\end{cases}
\end{equation}
where $I$ is the identity matrix of appropriate size.
Assuming $n=m$, the semidefinite representation for the Tsallis relative entropy $S_{\lambda}(A\Vert B)\coloneqq\frac{1}{\lambda}\tr(A-A^{1-\lambda}B^{\lambda})$
can be obtained via
\begin{equation}
	\frac{1}{\lambda}\tr(A-A^{1-\lambda}B^{\lambda})\le t \iff\exists s\in\R\text{ s.t. }
	\begin{cases}
		\tr(A^{1-\lambda}B^{\lambda})\ge s,\\
		\frac{1}{\lambda}(\tr(A)-s)\le t.
	\end{cases}
\end{equation}
Notice that $\tr(A^{1-\lambda}B^{\lambda})$ is Lieb's function $F_{\lambda}(A,B)$ with $K\in\R^{n\times n}$ being the identity matrix.
\begin{remark}
Because of Corollary \ref{sec6:thm1}, we can construct semidefinite representations of smaller size for Lieb's function, the Tsallis entropy, the Tsallis relative entropy than the semidefinite representations given in \cite{fawzi2017lieb}.
\end{remark}

Now let us consider the following maximum entropy optimization problem (also tested in \cite{fawzi2017lieb}):
\begin{equation}\label{sec6:eq7}
	\begin{cases}
		\sup\limits_{w_i} &S_{\lambda}\left(\sum_{i=1}^mw_iA_i\right)\\
		\text{s.t.}&\sum_{i=1}^mw_i=1,\\
		&w_i\ge0,i=1,\ldots,m,
	\end{cases}
\end{equation}
where $A_1,\ldots,A_m\in\bS_+^n$ are fixed positive semidefinite matrices of trace one. In Table \ref{tb6} we present numerical results of solving \eqref{sec6:eq7} with $m=10$, $n=20,40$ and different $\lambda$\footnote{The script is available at \href{https://github.com/wangjie212/MiniSOC}{https://github.com/wangjie212/MiniSOC}.}. The results were obtained with the SDP solver {\tt Mosek 9.0}. For each instance, we use two algorithms to generate the required second-order cone representations: {\tt GreedyPowertwo} and the one proposed in the paper \cite{fawzi2017lieb}.
As we can see from the table, the approach with {\tt GreedyPowertwo} is more efficient than the one with the algorithm from \cite{fawzi2017lieb} by a factor $\sim2$.

\begin{table}[htbp]
	\caption{Results for the maximum entropy optimization problem \eqref{sec6:eq7}. The column labelled by ``opt'' records the optimum; the columns labelled by ``W'' and ``F-S'' record the running time in seconds of the approaches with {\tt GreedyPowertwo} and the algorithm from \cite{fawzi2017lieb}, respectively.}\label{tb6}
	\renewcommand\arraystretch{1.2}
	\centering
	\begin{tabular}{c|c|c|c||c|c|c}
		\multirow{2}{*}{$\lambda$}&\multicolumn{3}{c||}{$n=20$}&\multicolumn{3}{c}{$n=40$}\\
		\cline{2-7}
		&opt&W&F-S&opt&W&F-S\\
		\hline
		$\frac{5}{27}$&3.9230&0.87&1.84&5.2130&21.5&52.7\\
		\hline
		$\frac{12}{41}$&4.7037&0.86&1.51&6.5351&20.9&41.6\\
		\hline
		$\frac{34}{63}$&7.3642&1.01&1.73&11.530&29.3&39.8\\
		\hline
		$\frac{21}{107}$&4.0125&1.42&2.77&5.3271&32.9&80.4\\
		\hline
		$\frac{79}{168}$&6.4613&1.21&2.25&9.7798&33.3&60.6\\
		\hline
		$\frac{43}{213}$&4.0429&1.73&3.12&5.3920&49.3&91.0\\
		\hline
		$\frac{135}{422}$&4.9303&1.77&3.33&6.9332&49.0&97.1\\
		\hline
		$\frac{341}{745}$&6.3203&1.65&3.63&9.4727&45.1&98.9\\
	\end{tabular}
\end{table}

\section{Conclusion and discussion}
In this paper, we have studied the optimal size of simple second-order cone representations for a weighted geometric mean inequality and the minimum cardinality of mediated sets containing a given point. Fast heuristic algorithms have been proposed to compute approximately optimal simple second-order cone representations for weighted geometric mean inequalities. As (matrix) weighted geometric means widely appear in optimization, we believe that these results will lead to various applications and stimulate more research on this subject. We conclude the paper by listing some open problems for future research:

(1) Is there a polynomial time algorithm for computing an optimal simple second-order cone representation for a weighted geometric mean inequality? Or is this an NP-hard problem?

(2) We have proved $L(s_1,\ldots,s_m)\ge m-1$ in Theorem \ref{sec3:thm1}. On the other hand, we observed that for the tested examples, the lower bound $m-1$ is attained only if $\hat{s}$ is even. Is this always true? Can we prove $L(s_1,\ldots,s_m)\ge m$ given that $\hat{s}$ is odd?

(3) We have provided two lower bounds on $L(s_1,\ldots,s_m)$ in terms of either the dimension $m$ or the degree $\hat{s}$. Is it possible to prove an improved lower bound on $L(s_1,\ldots,s_m)$ combining $m$ and $\hat{s}$?

\section*{Acknowledgements}
The author would like to thank Chunming Yuan for helpful discussions and thank the referees for their insightful comments. This work was funded by NSFC-12201618.

\bibliographystyle{plain}
\bibliography{refer}

\end{document}